\makeatletter \@addtoreset{equation}{section} \makeatother
\renewcommand\thetable{\thesection.\@arabic\c@table}
\theoremstyle{plain}
\newtheorem{maintheorem}{Theorem}
\newtheorem{theorem}{Theorem}[section]
\newtheorem{proposition}[theorem]{Proposition}
\newtheorem{lemma}[theorem]{Lemma}
\newtheorem{example}[theorem]{Example}
\newtheorem{maincorollary}{Corollary}
\newtheorem{remark}[theorem]{Remark}
\newcommand{\vep}{\varepsilon}
\newcommand{\supp}{\operatorname{supp}}
\newcommand{\cP}{\mathcal{P}}
\newcommand{\cE}{\mathcal{E}}
\newcommand{\cF}{\mathcal{F}}
\newcommand{\cU}{\mathcal{U}}
\newcommand{\tpitchfork}{%
  \vbox{
    \baselineskip\z@skip
    \lineskip-.52ex
    \lineskiplimit\maxdimen
    \m@th
    \ialign{##\crcr\hidewidth\smash{$-$}\hidewidth\crcr$\pitchfork$\crcr}
  }%
}
\newcounter{main}
\title{A criterion for the triviality of the centralizer for vector fields and applications}
\author[W. Bonomo]{Wescley Bonomo}
\address{W. Bonomo, Universidade Federal da Bahia\\
Av. Ademar de Barros s/n, 40170-110 Salvador, Brazil}
\email{wescleybonomo@yahoo.com.br}
\author[Paulo Varandas]{Paulo Varandas}
\address{Paulo Varandas, Departamento de Matem\'atica, Universidade Federal da Bahia\\
Av. Ademar de Barros s/n, 40170-110 Salvador, Brazil }
\email{paulo.varandas@ufba.br}
\urladdr{http://www.pgmat.ufba.br/varandas}
\begin{document}

\begin{abstract}
In this paper we establish a criterion for the triviality of the $C^1$-centralizer for vector fields and flows.
In particular we deduce the triviality of the centralizer at 
homoclinic classes of
$C^r$ vector fields ($r\ge 1$). Furthermore,  we show that the set of flows whose $C^1$-centralizer is trivial include:
(i) $C^1$-generic volume preserving flows, (ii) $C^2$-generic Hamiltonian flows on a generic and full Lebesgue measure set of energy
levels, and (iii) $C^1$-open set of non-hyperbolic vector fields  (that admit a Lorenz attractor).
We also provide a criterion for the triviality of the $C^0$-centralizer of continuous flows.
\end{abstract}

\keywords{Centralizers, homoclinic classes, uniform hyperbolicity, Hamiltonian flows, Lorenz attractors}
 \footnotetext{2010 {\it Mathematics Subject classification}:
37C10, 
37D20, 
37C15
} 
\date{\today}
\maketitle

\section{Introduction and statement of the main results}\label{sec:statements}

\subsection{Introduction}

In this article we study the centralizer of flows and vector fields. 
The centralizer of a flow is the set of flows that commute with the original one. 
In \cite{Sm}, Smale proposed a problem on the triviality of the centralizer in dynamical systems, asking if 
`typical' dynamics (meaning open and dense, or Baire generic) would have trivial centralizers. 
In the case of discrete time dynamical systems there have been substantial contributions towards 
both an affirmative solution to the problem raised by Smale and also to applications that exploit the relation of the centralizer 
with differentiability of conjugacies, embedding of diffeomorphisms as time-1 maps of
flows, rigidity of measurable $\mathbb Z^d$-actions on tori 
and  characterization of conjugacies for structurally stable diffeomorphisms, 
just to mention a few
(see e.g. \cite{BCW2, Bu04, Ko70, Fi09, Katok, Palis, PY89, Ro93, RV16, Yoccoz} and references therein).

The centralizer of continuous time dynamics has been less studied. Indeed, in the continuous time setting 
the problem of the centralizer has been considered for Anosov flows \cite{JH91}, $C^\infty$-Axiom A flows with the strong transversality condition~\cite{Sad2}, and for Komuro-expansive flows, singular-hyperbolic attractors and 
expansive $\mathbb R^d$-actions \cite{BRV,BoVa, oka}. In most cases, the ingredients in the proofs of the previous results
involve either a strong form of hyperbolicity and/or $C^\infty$-smoothness of the dynamics in order to use linearization of the dynamics in a neighborhood of a periodic or singular point.

In order to describe a larger class of vector fields (and flows) we establish a criterion 
for the triviality of the $C^1$-centralizer of vector fields based on the denseness of hyperbolic periodic orbits 
(Theorem~\ref{thm:criteriumH}). 
In particular the triviality of the centralizer can be deduced whenever there exists a dense set of hyperbolic periodic orbits, a condition that is satisfied by all homoclinic classes of $C^r$-vector fields in view of Birkhoff-Smale's theorem $(r\ge 1)$ (Corollary~\ref{cor:homoclinic}). 
We use this fact to show that $C^1$-generic volume preserving flows have a trivial centralizer
(Corollary~\ref{ks}), obtaining the counterpart of \cite{BCW2} for continuous time dynamics, 
and that  Lorenz attractors have trivial centralizer 
(cf. Corollary~\ref{cor:Lorenz}).
In the case of Hamiltonian dynamics the relation between symmetries and elements of the centralizer of a
Hamiltonian flow is more evident. Indeed, it follows from Noether's theorem that there exists a bijection
between conserved quantities, modulo constants, and the set of infinitesimal symmetries (see Subsection~\ref{subsec:Hamilton} for more details).  First results on commuting Hamiltonians were obtained in~\cite{Atiyah}, which described almost-periodic commuting Hamiltonian vector fields. 
Moreover, as the Hamiltonian is always a first integral for the 
Hamiltonian vector field it is natural to ask whether typical Hamiltonians have other independent first integrals.
Arnold-Liouville's integrability theorem assures that the existence of $n$ independent commuting integrals 
for a $C^2$-Hamiltonian $H$ implies the integrability of the Hamiltonian vector field and that the restriction of the flow to each invariant torus consists of a translation flow (see e.g.~\cite{Arnold}). However, $C^2$-generic Hamiltonians 
are transitive on each connected component of generic energy levels (\cite{BFRV}) and, consequently, continuous first integrals are constant on the corresponding level sets.
We use this fact here to prove 
that the centralizer of $C^2$-generic Hamiltonians on compact symplectic manifolds is trivial on generic level sets (we refer the reader to Corollary~\ref{hamil1} for the precise statement).

The problem of the centralizer for flows and vector fields differs substantially from the discrete-time setting. 
For instance, the statement of the criterion for triviality established here for vector fields is 
false even for Anosov diffeomorphisms (see Remark~\ref{rmkcrit}).
In rough terms, the first part of the strategy to the proof of Theorem~\ref{thm:criteriumH} is to show that the closure of the set of 
hyperbolic critical elements is preserved by all flows in the $C^1$-centralizer of a flow $(X_t)_t$.
The second part of the argument is to use transitivity to show that every element in the centralizer of $(X_t)_t$ is a linear 
(actually constant) reparametrization of $(X_t)_t$. One should mention that if one was interested to
describe a Baire generic subset of dynamical systems the  transitivity assumption could be removed. 
Indeed, as proposed by R. Thom \cite{Thom}, for every $1\le r\le \infty$, $C^r$-generic vector fields do not admit 
non-trivial first integrals (see e.g. \cite{mane}).

\subsection{Preliminaries}\label{sec:prelim}

In this subsection we recall some necessary definitions and set some notation.

\subsubsection{Non-wandering set, periodic orbits and hyperbolicity}\label{unifh}

Throughout,  let $M$ be a compact, connected Riemannian manifold without boundary.
Given a $C^1$-vector field $X\in \mathfrak{X}^1(M)$, we often denote by $(X_t)_{t\in \mathbb R}$ the $C^1$-flow 
generated by $X$ and let $\Omega(X)$ denote its non-wandering set.  Given a compact $(X_t)_{t\in \mathbb R}$-invariant set $\Lambda\subset M$, 
we say that $\Lambda$ is a \emph{hyperbolic set}  for $(X_t)_{t\in \mathbb R}$ if there exists a 
$DX_t$-invariant splitting $T_\Lambda M = E^s \oplus E^0 \oplus E^u$ so that:
(a) $E^0$ is one dimensional and generated by the vector field $X$, and (b) there are constants 
$C>0$ and $\lambda\in (0,1)$ so that 
$$
\| D X_t (x)\mid_{E^s_x} \| \le C \lambda^t 
	\quad\text{and}\quad
	\| (DX_t (x)\mid_{E^u_x})^{-1} \| \le C \lambda^t 
$$
for every $x\in \Lambda$ and $t\ge 0$. 
The flow $(X_t)_{t\in \mathbb R}$ is \emph{Anosov} if $\Lambda=M$ is a hyperbolic set.
A point $p \in M$ is: (i) a \emph{singularity} if $X_t(p) = p$ for all $t \in \mathbb{R}$, and (ii) \emph{periodic} if 
there exists $t > 0$ such that $X_t(p) = p$ and $\pi(p):=\inf\{t > 0: \, X_t(p) = p\} > 0$.
We denote by $Sing((X_t)_t)$ (or $Sing(X)$) the set of singularities, by $\text{Per}((X_t)_t)$ (or $\text{Per}(X)$) the set of periodic orbits and by $Crit((X_t)_t):=Sing((X_t)_t) \cup \text{Per}((X_t)_t)$ the set of all critical elements for the flow 
$(X_t)_t$. 
Every non-singular point is called \emph{regular}.
A flow $(X_t)_t$ is called \emph{Axiom A} if $\Omega(X)$ is a uniformly hyperbolic set 
and $\overline{\text{Crit}((X_t)_t)}=\Omega(X)$. 
A periodic point $p \in M$ is \emph{hyperbolic} if its orbit $\gamma_p:= \{X_t(p): t \in \mathbb{R}\}$
is a hyperbolic set. A singularity $\sigma \in M$ is \emph{hyperbolic} if $DX(\sigma)$ has no purely imaginary 
eigenvalues. Denote by $W^s(\gamma_p)$ (resp. $W^u(\gamma_p)$) the usual stable
(resp. unstable) manifolds of the hyperbolic periodic orbit $\gamma_p$. The stable and unstable
manifolds for a singularity $\sigma$ are denoted similarly by $W^s(\sigma)$ and $W^u(\sigma)$, respectivelly
(we refer the reader to \cite[Subsection~2.1]{vitorzeze} for the classical definitions of stable and unstable manifolds). 
The \emph{homoclinic class} associated to a hyperbolic critical element $z$ is the compact invariant 
set $H(z)=\overline{W^s(\gamma_z)\pitchfork W^u(\gamma_z)}$.
Finally, we say that a flow $(X_t)_t$ is \emph{transitive} if there exists $x\in M$
so that $(X_t(x))_{t\in \mathbb R_+}$ is dense in $M$.
We refer to \cite{KH, palisdemelo} for more details on uniform hyperbolicity and homoclinic classes.

\subsubsection{Commuting flows and centralizers}

Given $r \geq 0$, let $\mathcal F^r(M)$ denote the space of $C^r$-flows on $M$.
We say that the flows $(X_t)_{t\in \mathbb R}$ and $(Y_t)_{t\in \mathbb R}$ \emph{commute} if
$Y_s \circ X_t = X_t \circ  Y_s$ for all $s, t\in \mathbb R$.
Given a flow $(X_t)_{t\in\mathbb R} \in \mathcal F^r(M)$, the \emph{centralizer of $X$} is the set
of $C^r$-flows that commute with $X$, that is, 
\begin{equation}\label{eq:defcentPhi}
\mathcal Z^r ((X_t)_t)
	=\{ (Y_s)_{s\in \mathbb R} \in \mathcal F^r(M) 
		\colon Y_s \circ X_t = X_t \circ  Y_s , \forall \, s, t\in \mathbb R\}.
\end{equation}
It is clear from the definition that all flows obtained as smooth reparametrizations of $(X_t)_t$ 
belong to $\mathcal Z^r ((X_t)_t)$. 
For that reason, the centralizer of the time-one map of a flow is never a discrete subgroup of the space of 
diffeomorphisms.
Given $r\ge 0$, we say a flow $(X_t)_t \in \cF^r(M)$ has \emph{quasi-trivial  centralizer} if for any $Y \in \mathcal{Z}^r(X)$  there exists a continuous function $h: M \rightarrow \mathbb{R}$ so that 
\begin{itemize}
\item[(i)] (orbit invariance) $h(x) = h(X_t(x))$ for every $(t, \, x) \in \mathbb{R} \times M$, and 
\item[(ii)] $Y_t(x) = X_{h(x)t} \, (x)$ for every $(t, \, x) \in \mathbb{R} \times M$.
\end{itemize}
In the case that the reparametrizations $h$ are necessarily constant then we say the centralizer is \emph{trivial}.
The previous notion has a dual formulation in terms of vector fields. 
Given $r\ge 1$ and $X\in \mathfrak{X}^r(M)$, one can define the \emph{centralizer of the vector field $X$} by
\begin{equation}\label{eq:defcentX}
\mathcal Z^r (X)
	=\{ Y \in \mathfrak{X}^r(M) 
		\colon [X,Y]=L_Y X= 0\},
\end{equation}
where $[X,Y]$ denotes the usual comutator of the vector fields $X$ and $Y$, and 
$L_Y X$ denotes the Lie derivative of the vector field $X$ along $Y$. 
We say that the vector field $X \in \mathfrak{X}^r(M)$ has \emph{quasi-trivial centralizer} if for any $Y \in \mathcal{Z}^r(X)$ there exists a continuous $h: M \to \mathbb R$ 
that is constant along the orbits of $(X_t)_t$ (i.e. is a first integral for the flow) and so that $Y = h \cdot X$. 
As before, the centralizer is \emph{trivial} if any such $h$ is necessarily constant.
Observe that any $C^1$ reparametrization $h: M \to \mathbb R$ satisfying $X(h)=0$ is constant along the
orbits of the flow and, hence, is a first integral for the flow. 
If $(X_t)_t$ is transitive then any first integral is necessarily constant. For that reason, any transitive flow 
with a quasi-trivial centralizer has trivial centralizer. Moreover, 
as $C^r$-generic vector fields do not admit  non-trivial first integrals \cite{mane}, the set of $C^1$-flows
with a quasi-trivial and non-trivial centralizer is meager.

\subsubsection{Hamiltonian vector fields and flows}\label{subsec:Hamilton}

Let $(M^{2n}, \, \omega)$ be a compact symplectic Riemannian manifold. Given $r\ge 1$ and 
a Hamiltonian $H \in C^{r+1}(M,\mathbb R)$, the Hamiltonian vector field $X_H\in \mathfrak{X}^{r}_\omega(M)$ is defined by $\omega(X_H(x),v)=DH(x) \,v$ for every $x\in M$ and $v\in T_x M$. In the case $r\ge 2$ the Hamiltonian vector field $X_H$ is at least $C^1$ and we denote by $(\varphi^H_t)_t$ the Hamiltonian flow generated by $X_H$.
We say that $K\in C^{r+1}(M,\mathbb R)$ is a \emph{first integral} (or an infinitesimal symmetry) for the Hamiltonian $H\in C^{r+1}(M,\mathbb R)$ 
if $K(\varphi^H_t(x))=K(x)$ for every $x\in M$ and $t\in \mathbb R$. A first integral $K$ for a Hamiltonian $H$ is characterized
by $\{K,H\}=0$, where the Poisson bracket $\{\cdot, \cdot\} : C^{r+1}(M,\mathbb R)\times C^{r+1}(M,\mathbb R) \to C^{r}(M,\mathbb R)$, defined by
\begin{equation}\label{def:Poissonbracket}
\{H,K\}(x) = \frac{d}{dt} H(\varphi^K_t)\mid_{t=0}, 
\end{equation}
measures the derivative of $H$ along the orbits of $(\varphi^K_t)_t$. 
Recall $\{H,K\}=-\{K,H\}$ and that the Poisson and Lie brackets for Hamiltonians are related by $[X_K, \, X_H] = X_{\{H, \, K\}}$ (see e.g. \cite{Lee}). 
Hence, the Poisson bracket  determines commutativity of Hamiltonians: $H,K \in C^r(M,\mathbb R)$ commute if and only if the Poisson bracket $\{H,K\}$ is locally constant (see e.g. \cite[Section~40]{Arnold}).  Nevertheless, two Hamiltonian flows may commute and not preserve 
level sets (see Example~\ref{commuteR2H}). 
On the one hand, Noether's  Theorem (see e.g. \cite{KH} or \cite[Theorem 18.27]{Lee}) establishes a bijective correspondence between the set of vector fields $Y \in \mathfrak{X}^r(M)$ such that $H(Y_t(x)) = H(x)$  and $Y_t^{*} \omega = \omega$
for any $t \in \mathbb{R}$ and $x \in M$ (also known as conserved quantities) and the set of first integrals for $X_H$, modulo addition by constants. On the other hand, $K$ is a first integral of $X_H$ iff $\{K, \, H\} = 0$ (see e.g. \cite[Theorem~2]{meyer}). 

These facts motivate the following definition.
Given $r\ge 1$, a Hamiltonian $H \in C^{r+1}(M, \mathbb R)$ with associated vector field $X_H \in \mathfrak{X}^{r}(M)$, the \emph{Hamiltonian $C^r$-centralizer} of $X_H$ is 
\begin{equation}\label{eq:defcentXH}
\mathcal{Z}^r_{\omega}(X_H) = \{X_K \in \mathfrak{X}^r_{\omega}(M): \, \{K, \, H\} = 0\}.
\end{equation}
Then, we say that the centralizer of a Hamiltonian flow  $(\varphi_t^H)_t$ is: (i) \emph{quasi-trivial} if 
for every $X_K \in \mathcal{Z}^r_{\omega}(X_H)$ there exists a continuous map $h$ such that $X_H(x)= h(x) X_K(x)$ for every $x\in M$, and (ii) \emph{trivial} on the connected component 
$\mathcal E_{H,e} \subset H^{-1}(e)$ if for every $X_K \in \mathcal{Z}^r_{\omega}(X_H)$ there exists $c\in \mathbb R$
such that $X_K(x)=c X_H(x)$ for every $x\in \mathcal E_{H,e}$.

\subsection{Statement of the main results}\label{subsec:main}

First we will state the criterion for the triviality of the centralizer of vector fields and flows. 

\begin{maintheorem}\label{thm:criteriumH} 
Let $M$ be a compact Riemannian manifold and  and let $(X_t)_t$ be the flow generated by $X \in \mathfrak{X}^r(M)$, 
for some integer $r\ge 1$. Assume that $\Lambda\subset M$ is a compact, $(X_t)_t$-invariant and transitive subset. 
If the set of hyperbolic periodic orbits of $(X_t)_t$ is dense in $\Lambda$ then $\mathcal{Z}^1(X\mid_\Lambda)$ is trivial:
if $Y \in \mathcal{Z}^1(X)$ then the flow $(Y_t)_t$  preserves $\Lambda$ and there exists $c \in \mathbb{R}$ 
such that $Y\!\mid_{\Lambda} = c X\!\mid_{\Lambda}$.
\end{maintheorem}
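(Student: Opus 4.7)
The plan is to exploit the commutation $Y_s \circ X_t = X_t \circ Y_s$ so as to force every $Y \in \mathcal Z^1(X)$ to coincide, on $\Lambda$, with a constant rescaling of $X$. The first step is structural: for any hyperbolic periodic point $p$ with period $\pi(p)$, the identity $X_{\pi(p)}(Y_s(p)) = Y_s(X_{\pi(p)}(p)) = Y_s(p)$ shows that $Y_s(p)$ is $X$-periodic, that its $X$-orbit equals $Y_s(\gamma_p)$, and that the $DX_t$-invariant splitting over $\gamma_p$ is carried by $DY_s$ to a hyperbolic splitting over $Y_s(\gamma_p)$. Because $\gamma_p$ is isolated as a hyperbolic invariant set, there is a neighborhood $U$ of $\gamma_p$ whose maximal $X$-invariant subset equals $\gamma_p$; for $|s|$ small, $Y_s$ is uniformly close to the identity, so the $X$-invariant set $Y_s(\gamma_p)$ lies inside $U$, forcing $Y_s(\gamma_p) = \gamma_p$. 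The flow property $Y_{s+s'} = Y_s \circ Y_{s'}$ then extends this invariance to every $s \in \mathbb R$.

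Once $Y_s$ fixes $\gamma_p$ setwise, the continuous map $h \colon \mathbb R \to \mathbb R / \pi(p) \mathbb Z$ defined by $Y_s(p) = X_{h(s)}(p)$ is a homomorphism with $h(0) = 0$, so $h(s) = c_p\, s \bmod \pi(p)$ for some $c_p \in \mathbb R$. Differentiating at $s = 0$ gives $Y(p) = c_p X(p)$, and applying $DX_t \circ Y = Y \circ X_t$ at $p$ propagates this to $Y|_{\gamma_p} = c_p X|_{\gamma_p}$. To globalize, I would restrict attention to the open stratum $\Lambda^{\ast} := \{x \in \Lambda : X(x) \neq 0\}$ and decompose $Y(x) = \lambda(x) X(x) + Z(x)$ with $Z(x) \perp X(x)$, where $\lambda$ and $Z$ are continuous on $\Lambda^{\ast}$. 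On the dense union of hyperbolic periodic orbits contained in $\Lambda$, $Z$ vanishes and $\lambda = c_p$, so continuity yields $Z \equiv 0$ on $\Lambda^{\ast}$. The relation $[X,Y] = 0$ rewritten for $Y = \lambda X$ becomes $X(\lambda)\, X = 0$, so $\lambda$ is constant along the $X$-orbits in $\Lambda^{\ast}$; by transitivity, some $X$-orbit is dense in $\Lambda$, and continuity of $\lambda$ forces it to equal a single constant $c \in \mathbb R$ throughout $\Lambda^{\ast}$.

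Finally I would treat the singularities of $X$ in $\Lambda$ and verify that $(Y_t)_t$ preserves $\Lambda$. Given $\sigma \in \Lambda$ with $X(\sigma) = 0$, density of hyperbolic periodic orbits supplies regular points $x_n \in \Lambda^{\ast}$ with $x_n \to \sigma$; since $Y(x_n) = c X(x_n)$, continuity of $Y$ gives $Y(\sigma) = c X(\sigma) = 0$, completing $Y|_\Lambda = c X|_\Lambda$. Invariance of $\Lambda$ under $(Y_t)_t$ follows from the first step, as $Y_s$ fixes every hyperbolic periodic orbit setwise and $\Lambda$ is the closure of their union. The main obstacle I anticipate is the passage from $Y_s(\gamma_p) \subset U$ to $Y_s(\gamma_p) = \gamma_p$: although Hausdorff-closeness of the image to $\gamma_p$ follows from $Y_s$ being close to the identity, the set-theoretic equality genuinely requires the local maximality of $\gamma_p$ combined with the $X$-invariance of $Y_s(\gamma_p)$ obtained from commutation. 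A secondary subtlety, highlighted in Remark~\ref{rmkcrit} as being false in the discrete-time setting, is that the constants $c_p$ could a priori vary across orbits; this is precisely where the interpolant $\lambda$ on $\Lambda^{\ast}$ and transitivity combine to produce a single constant $c$.
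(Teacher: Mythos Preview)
Your argument is correct and follows the same route as the paper: $(Y_s)$ preserves each hyperbolic periodic orbit, hence $Y$ and $X$ are collinear there, hence by density collinear on $\Lambda^\ast$, and transitivity forces the scalar to be a single constant. The paper's execution is marginally simpler in two places: it obtains $Y_s(\gamma_p)=\gamma_p$ for \emph{all} $s$ in one stroke by observing that hyperbolic periodic orbits of a fixed period are isolated (avoiding your local-maximality plus small-$s$ group-extension argument), and it shows the scalar is constant along $X$-orbits directly from the pushforward relation $Y(X_t(x))=DX_t(x)\,Y(x)$ rather than from $[X,Y]=0$. Your Lie-bracket step needs one more line of justification: since $Y=\lambda X$ holds only on $\Lambda^\ast$, which typically has empty interior in $M$, you cannot simply substitute into $[X,Y]=0$; the missing observation is that the orthogonal part $Z$ vanishes identically on the $X$-invariant set $\Lambda^\ast$, so $(L_X Z)|_{\Lambda^\ast}=\frac{d}{dt}\big|_{t=0} DX_{-t}\,Z(X_t(x))=0$, and then $0=[X,Y]=X(\lambda)X+[X,Z]$ indeed yields $X(\lambda)=0$ on $\Lambda^\ast$.
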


Theorem~\ref{thm:criteriumH} has a first application to non-uniformly hyperbolic flows. 
Assume that $(X_t)_t$ is the flow generated by $X\in \mathfrak{X}^{1}(M)$ and let $\mu$ be a non-atomic and 
$(X_t)_t$-invariant probability measure. It follows from
Oseledets theorem (\cite{O}) that for $\mu$-almost every $x$ there exists a $DX_t$-invariant decomposition 
$T_xM= E^0_x \oplus E^1_x \oplus E^2_x\oplus \dots \oplus E_{x}^{k(x)}$, called the \emph{Oseledets splitting}, where $E^0_x$ is the one-dimensional subspace generated by $X(x)$ and 
there are well defined real numbers
$$
\lambda_i(X,x):= \lim_{t\to\pm \infty} \frac1t \log \|DX_t(x) v_i\|,
	\qquad \forall v_i \in E^i_x\setminus \{\vec0\} \text{ and }
	 1\le i\le k(x)
$$
called the \emph{Lyapunov exponents} associated to the flow $(X_t)_t$ and $x$. It is well known that if $\mu$ is 
ergodic then the Lyapunov exponents are almost everywhere constant and are denoted by $\lambda_i(X,\mu)$, $1\le i \le k$.
Such an ergodic probability measure $\mu$ is \emph{hyperbolic} if 
$\lambda_i(X,\mu)\neq 0$
for every $1\le i \le k$.
Given a $C^{2}$-diffeomorphism $f$, the support of any non-atomic, $f$-invariant, ergodic and hyperbolic probability measure 
is transitive and contains a dense set of periodic orbits (cf.~\cite[Theorem 4.1]{Katok}). In the case of vector fields this result is a consequence of \cite{LY}.
We refer the reader to \cite{BP} for an excelent account on non-uniform hyperbolicity.
Then, Theorem~\ref{thm:criteriumH} has the following immediate consequence:

\begin{maincorollary}\label{cor:Pesin}
Let $M$ be a compact Riemannian manifold and let $(X_t)_t$ be the flow generated by $X \in \mathfrak{X}^r(M)$, 
for some integer $r\ge 2$. If $\mu$ is a non-atomic $(X_t)_t$-invariant, ergodic and hyperbolic probability measure then
$\mathcal{Z}^1(X\mid_{\supp \mu})$ is trivial.
\end{maincorollary}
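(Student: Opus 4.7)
The plan is to apply Theorem~\ref{thm:criteriumH} directly to the compact set $\Lambda := \supp \mu$, so the whole proof reduces to checking the three hypotheses of that theorem: (a) $\Lambda$ is $(X_t)_t$-invariant, (b) $\Lambda$ is transitive, and (c) the set of hyperbolic periodic orbits of $(X_t)_t$ is dense in $\Lambda$.

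Conditions (a) and (b) follow from basic ergodic theory. Invariance of $\mu$ gives invariance of its topological support, so $(X_t)_t$ preserves $\Lambda$. Ergodicity, combined with the fact that $M$ (hence $\Lambda$) is second-countable, implies that for $\mu$-almost every $x\in M$ the forward orbit $(X_t(x))_{t\ge 0}$ visits every element of a countable basis of open sets of positive $\mu$-measure; this is the standard Birkhoff argument which yields that $\mu$-a.e.\ point has a forward orbit dense in $\supp\mu$, and in particular $\Lambda$ is transitive.

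Condition (c) is the substantive point and is exactly the statement recalled just before the corollary. Since $r\ge 2$, the vector field $X$ is in particular $C^{1+\alpha}$ for some $\alpha>0$, which is the regularity required for Pesin theory and for the flow version of Katok's closing lemma due to Lian--Young \cite{LY}. Non-atomicity of $\mu$ rules out the degenerate case where $\supp\mu$ is a single critical orbit and guarantees that the hyperbolicity of $\mu$ can be combined with the closing lemma on a Pesin block of positive measure; the outcome is precisely the existence of hyperbolic periodic orbits in every neighborhood of every point of a full $\mu$-measure subset of $\supp\mu$, and taking closures yields density in $\Lambda = \supp\mu$.

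With (a), (b), (c) in hand, Theorem~\ref{thm:criteriumH} applied to $\Lambda=\supp\mu$ directly yields that for every $Y\in \mathcal Z^1(X)$ the flow $(Y_t)_t$ preserves $\supp\mu$ and there exists $c\in\mathbb R$ with $Y\!\mid_{\supp\mu} = c\, X\!\mid_{\supp\mu}$, which is the triviality of $\mathcal Z^1(X\mid_{\supp\mu})$. The ``main obstacle'' is really not internal to this corollary at all; it has been absorbed into the criterion of Theorem~\ref{thm:criteriumH}. The only mild subtleties are to (i) confirm that the $C^2$ assumption on $X$ (rather than $C^{1+\alpha}$) is sufficient to invoke \cite{LY} in the flow setting, and (ii) record why non-atomicity is used, namely to ensure the Pesin blocks where hyperbolic periodic orbits accumulate are not concentrated at a single singularity or periodic orbit, so that density in $\supp\mu$ (and not a proper subset) is actually obtained.
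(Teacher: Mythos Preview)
Your proposal is correct and follows essentially the same route as the paper: verify transitivity of $\supp\mu$ via ergodicity, obtain density of hyperbolic periodic orbits from the flow version of Katok's closing lemma due to Lian--Young~\cite{LY} (which requires the $C^2$ hypothesis), and then invoke Theorem~\ref{thm:criteriumH}. The paper treats the corollary as an immediate consequence and only sketches this outline in the paragraph preceding the statement; your write-up simply fills in the standard details.
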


We note that the assumptions of Theorem~\ref{thm:criteriumH} are also satisfied by homoclinic classes. 
Indeed, a homoclinic class $\Lambda$ is transitive and it follows from Birkhoff-Smale's theorem 
that hyperbolic periodic orbits are dense (see e.g. \cite{KH}).
Thus we also get the following immediate consequence.

\begin{maincorollary}\label{cor:homoclinic}
Assume that $r\ge 1$, $X\in \mathfrak{X}^r(M)$, $p\in M$ is a hyperbolic critical element for $X$ and $\Lambda=H(p)$ 
is a homoclinic class for $X$. Then $\mathcal{Z}^1(X\mid_\Lambda)$ is trivial.
\end{maincorollary}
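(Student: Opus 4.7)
The plan is to apply Theorem~\ref{thm:criteriumH} directly with $\Lambda = H(p)$, so it suffices to verify the four hypotheses of the theorem for a homoclinic class: compactness, $(X_t)_t$-invariance, transitivity, and density of hyperbolic periodic orbits.

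Compactness and invariance are immediate from the definition $H(p) = \overline{W^s(\gamma_p) \pitchfork W^u(\gamma_p)}$. The ambient manifold $M$ is compact, and both $W^s(\gamma_p)$ and $W^u(\gamma_p)$ are $(X_t)_t$-invariant, so their set of transverse intersections is invariant, and invariance is preserved under closure. Transitivity of $H(p)$ is classical: the $\lambda$-lemma applied near the hyperbolic orbit $\gamma_p$, combined with the existence of transverse homoclinic points, produces orbit segments connecting any two relatively open subsets of $H(p)$, and a Baire-category argument then yields a point whose positive orbit is dense in $H(p)$.

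The remaining hypothesis — density of hyperbolic periodic orbits in $H(p)$ — is furnished by the Birkhoff-Smale homoclinic theorem for flows (see e.g.\ \cite{KH}). Around each transverse homoclinic point $q \in W^s(\gamma_p) \pitchfork W^u(\gamma_p)$ one constructs a horseshoe, namely a locally maximal hyperbolic set conjugate to the suspension of a subshift of finite type, whose periodic orbits accumulate on $q$. Since such transverse homoclinic points are dense in $H(p)$ by definition, the hyperbolic periodic orbits of $(X_t)_t$ are dense in $H(p)$ as well. Once all hypotheses are checked, Theorem~\ref{thm:criteriumH} yields that every $Y \in \mathcal{Z}^1(X)$ preserves $\Lambda = H(p)$ and that there exists $c \in \mathbb{R}$ with $Y\!\mid_{H(p)} = c\, X\!\mid_{H(p)}$. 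There is no substantive obstacle in the argument: the analytic content resides entirely in Theorem~\ref{thm:criteriumH}, while the corollary's role is to single out homoclinic classes as a natural family of invariant sets meeting the criterion.
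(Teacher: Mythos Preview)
Your proof is correct and follows exactly the approach taken in the paper: the corollary is obtained as an immediate consequence of Theorem~\ref{thm:criteriumH} once one observes that a homoclinic class is transitive and, by Birkhoff--Smale, has a dense set of hyperbolic periodic orbits. Your write-up is slightly more detailed than the paper's (which simply cites these facts and moves on), but the content is the same.
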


Corollary~\ref{cor:homoclinic},  which applies e.g. to hyperbolic basic pieces of $C^r$-flows, will be useful to study the triviality of the centralizer 
for non-hyperbolic homoclinic classes, including volume preserving  and Hamiltonian vector fields, and vector fields that exhibit Lorenz attractors 
(see Section~\ref{sec:prelim} for definitions).

\begin{remark}\label{rmkcrit}
Theorem~\ref{thm:criteriumH} and Corollary~\ref{cor:homoclinic} do not admit a counterpart for discrete time dynamics. 
Indeed, there are linear Anosov automorphisms on $\mathbb T^n$ (hence transitive, with a dense set of 
periodic orbits, all of them hyperbolic) that do not have trivial centralizer (see e.g.~\cite{Plykin}).
Nevertheless, any Anosov flow (e.g. suspensions of an Anosov diffeomorphisms)
satisfies the hypothesis of Theorem~\ref{thm:criteriumH} and, consequently, has trivial centralizer.
\end{remark}

In order to state some consequences of Theorem~\ref{thm:criteriumH} and Corollary~\ref{cor:homoclinic} we recall that a subset $R$ of a topological 
space $E$ is called \emph{Baire residual} if it contains a countable intersection of open and dense sets, and it is called \emph{meager} if it is the complementary set of a Baire residual subset. It follows from Baire category theorem that if $E$ is a Baire space then all residual subsets are dense in $E$. Given $r\ge1$,
let $\mathfrak{X}^r_\mu(M)$ denote the space of $C^r$-volume preserving vector fields on $M$. 

\begin{maincorollary}\label{ks}
Let $r\ge 1$ and let $M$ be a compact and connected Riemannian manifold. 
There exists a $C^r$-residual subset $\mathcal R \subset \mathfrak{X}^r(M)$ 
such that if $X\in \mathcal R$, $Y\in \mathcal{Z}^1(X)$ and
$\Lambda\subset \overline{Per(X)}$ is a transitive invariant set then there exists $c\in \mathbb R$ so that $Y=cX$ 
on $\Lambda$.
Moreover, $C^1$-generic vector fields in $\mathfrak{X}^1_\mu(M)$ have trivial $C^1$-centralizer.
\end{maincorollary}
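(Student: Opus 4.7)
The plan is to produce, in each case, a residual subset of the relevant space of vector fields on which the hypotheses of Theorem~\ref{thm:criteriumH} hold for the targeted invariant set.

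For the first assertion, I would take $\mathcal R\subset\mathfrak{X}^r(M)$ to be the classical Kupka-Smale residual, so that every $X\in\mathcal R$ has all its periodic orbits hyperbolic. Given such an $X$ and a transitive invariant compact set $\Lambda\subset\overline{\mathrm{Per}(X)}$, the set of hyperbolic periodic orbits of $X$ coincides with $\mathrm{Per}(X)$, and by the inclusion its closure contains $\Lambda$. Thus the hyperbolic periodic orbits of $X$ are dense in $\Lambda$, which is exactly the hypothesis of Theorem~\ref{thm:criteriumH}. Applying that theorem yields, for every $Y\in\mathcal Z^1(X)$, a constant $c\in\mathbb R$ with $Y\!\mid_\Lambda=cX\!\mid_\Lambda$.

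For the second (conservative) assertion, I would intersect three $C^1$-residual subsets of $\mathfrak{X}^1_\mu(M)$. The first, given by the conservative closing lemma of Pugh-Robinson together with the identity $\Omega(X)=M$ that follows from Poincar\'e recurrence for volume-preserving flows, consists of the vector fields satisfying $\overline{\mathrm{Per}(X)}=M$. The second is the residual set of transitive conservative flows on $M$, produced by the conservative version of the Bonatti-Crovisier connecting lemma (Arbieto-Matheus, Bessa). The third is a residual set on which hyperbolic periodic orbits are dense in $M$: fixing a countable basis $\{B_n\}$ of open sets of $M$, one shows that each $\mathcal H_n=\{X\in\mathfrak{X}^1_\mu(M): X \text{ has a hyperbolic periodic orbit meeting } B_n\}$ is $C^1$-open and dense, by combining the conservative closing lemma with a small volume-preserving perturbation that pushes the eigenvalues of the linearized Poincar\'e return map off the unit circle. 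On the intersection $\mathcal R_\mu$ of these three residuals, the pair $(X,\Lambda=M)$ satisfies the hypotheses of Theorem~\ref{thm:criteriumH}, so the theorem produces, for each $Y\in\mathcal Z^1(X)$, a constant $c\in\mathbb R$ with $Y=cX$ on $M$, i.e.\ the centralizer is trivial.

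The most delicate point is the density of hyperbolic periodic orbits in the conservative setting. In contrast with the classical Kupka-Smale theorem, a $C^1$-generic volume-preserving periodic orbit need not be hyperbolic, since elliptic eigenvalues on the unit circle are not excluded by transversality arguments alone. Hence, after the conservative closing lemma produces an approximate closed orbit near a prescribed region, one must still guarantee the existence of a further arbitrarily small perturbation in $\mathfrak{X}^1_\mu(M)$ that preserves the periodic orbit and turns it hyperbolic. The availability of such volume-preserving perturbations is standard (Zuppa's Kupka-Smale theorem for divergence-free vector fields, and its flow counterparts in Bessa-Rocha), but fitting them together with the closing lemma to produce the genericity in (iii) is the main technical obstacle in carrying out the plan.
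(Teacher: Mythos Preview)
Your proposal is correct and follows essentially the same route as the paper: for the first assertion you both take $\mathcal R$ to be the Kupka--Smale residual and feed the resulting density of hyperbolic periodic orbits in $\Lambda$ into Theorem~\ref{thm:criteriumH}; for the conservative assertion you both intersect three $C^1$-residual subsets of $\mathfrak{X}^1_\mu(M)$ yielding transitivity (Bessa), density of periodic orbits (Pugh--Robinson closing lemma together with $\Omega(X)=M$), and density of \emph{hyperbolic} periodic orbits, and then invoke Theorem~\ref{thm:criteriumH} with $\Lambda=M$. The only difference is cosmetic: where you sketch a direct argument for the third residual (and flag it as the delicate step), the paper simply appeals to existing results of Pugh--Robinson, Robinson and Newhouse (the latter showing that generically closed orbits are hyperbolic or elliptic, with hyperbolic ones accumulating via Birkhoff's theorem), which dissolves the obstacle you were worried about.
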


The second assertion in Corollary~\ref{ks} is the counterpart of the results in \cite{BCW2} for $C^1$-volume preserving 
vector fields. 

Now we draw our attention to the existence of $C^1$-open sets of vector fields with trivial centralizer.
Kato and Morimoto~\cite{Kato} used the notion of topological stability to prove that all $C^1$-Anosov flows have 
quasi-trivial centralizer (hence transitive Anosov flows have trivial centralizer). 
In ~\cite{Sad2}, Sad used a linearization technique 
to prove that the centralizer is trivial for a $C^\infty$-open and dense set of $C^\infty$-Axiom A flows with the strong transversality condition. Theorem~\ref{thm:criteriumH} can also be used to exhibit new $C^1$-open sets of vector fields
that have non-uniformly hyperbolic attractors on which the centralizer is trivial. 
These are open sets of vector fields that
exhibit Lorenz attractors (we refer the reader to \cite{vitorzeze} for definitions and a large account on Lorenz attractors).   
More precisely we obtain the following:

\begin{maincorollary}\label{cor:Lorenz}
Let $\mathcal U \subset \mathfrak{X}^1(\mathbb R^3)$ be a $C^1$-open set of vector fields and let $V \subset \mathbb R^3$ be an open elipsoid containing the origin such that every $X\in \cU$ exhibits  a  Lorenz attractor 
$\Lambda_X = \bigcap_{t\ge 0} \overline{X_t(V)}$. 
Then $\mathcal Z^1(X\mid_{\Lambda_X})$ is trivial for every $X\in \mathcal U$: 
for any $Y \in \mathcal{Z}^1(X)$ the flow generated by $Y$ preserves the $(X_t)_t$-invariant set $\Lambda_X$ 
and there exists $c \in \mathbb{R}$ such that $Y\mid_{\Lambda_X} = c X\mid_{\Lambda_X}$.
\end{maincorollary}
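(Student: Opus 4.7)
The plan is to verify, for every $X\in \cU$, the hypotheses of Theorem~\ref{thm:criteriumH} with $\Lambda=\Lambda_X$, and then invoke it directly. The two points to check are transitivity of $\Lambda_X$ and density of hyperbolic periodic orbits in $\Lambda_X$; the conclusion of the corollary is then an immediate restatement of the conclusion of the criterion. The fact that the ambient $\mathbb R^3$ is not compact is harmless: Theorem~\ref{thm:criteriumH} concerns a compact invariant set $\Lambda$ living inside a compact manifold $M$, and one may extend $X$ arbitrarily outside a large ball containing $V$ and work inside $\mathbb S^3$, say, since the conclusion is intrinsic to $\Lambda_X$.

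Fix $X\in \cU$. By the defining property of the trapping region $V$ (namely $\overline{X_t(V)}\subset V$ for all $t>0$), the set $\Lambda_X=\bigcap_{t\ge 0}\overline{X_t(V)}$ is compact and $(X_t)_t$-invariant. From the theory of Lorenz attractors (see e.g.~\cite{vitorzeze} and references therein), $\Lambda_X$ is a transitive singular hyperbolic attractor: it carries a dominated splitting $T_{\Lambda_X}M=E^s\oplus E^{cu}$ with $E^s$ uniformly contracted and $E^{cu}$ area-expanded, and its periodic orbits form a dense subset of $\Lambda_X$. Any such periodic orbit $\gamma$ is regular (the Lorenz-type singularity is not periodic), so along $\gamma$ the dominated splitting refines to a uniformly hyperbolic splitting $E^s\oplus \langle X\rangle \oplus E^u$ with $E^u\subset E^{cu}$; domination, together with uniform contraction of $E^s$ and area-expansion of $E^{cu}$, forces $E^u$ to be uniformly expanded along $\gamma$. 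Hence every periodic orbit contained in $\Lambda_X$ is hyperbolic.

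With the two hypotheses of Theorem~\ref{thm:criteriumH} verified, the criterion applies and yields, for every $Y\in \mathcal Z^1(X)$, that the flow $(Y_t)_t$ preserves $\Lambda_X$ and that there exists $c=c(Y)\in\mathbb R$ with $Y\mid_{\Lambda_X}=c\,X\mid_{\Lambda_X}$, which is precisely the conclusion of the corollary. The only non-elementary input is the density of periodic orbits in Lorenz (and more generally singular hyperbolic) attractors, a substantial theorem from the literature on singular hyperbolic flows which I would cite rather than reprove; this is the main obstacle to acknowledge, and it is well documented in \cite{vitorzeze} and the references therein. The $C^1$-openness of $\cU$ is merely the ambient hypothesis and plays no further role in the argument, as the proof is carried out one vector field at a time.
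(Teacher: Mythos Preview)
Your proposal is correct and follows essentially the same approach as the paper: verify the hypotheses of Theorem~\ref{thm:criteriumH} for $\Lambda_X$ and apply it. The only cosmetic difference is that the paper cites the single fact that $\Lambda_X$ is a homoclinic class $H(p_X)$ (\cite[Proposition~3.17]{vitorzeze}) and then invokes Birkhoff--Smale to get transitivity and density of hyperbolic periodic orbits in one stroke (i.e.\ it routes through Corollary~\ref{cor:homoclinic}), whereas you cite transitivity and density of periodic orbits separately and supply the short singular-hyperbolic argument for their hyperbolicity.
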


The previous result is complementary to \cite[Theorem~A]{BRV} where the authors proved that a $\mathcal{C}^1$-open and $\mathcal{C}^{\infty}$-dense subset of vector fields $\mathfrak{X}^\infty(M^3)$ that exhibit Lorenz attractors have trivial centralizer on their topological basin of attraction.

\begin{remark}
The argument in the proof of Corollary~\ref{cor:Lorenz} consists of checking that every singular-hyperbolic attractor, at least for
three-dimensional flows, is a homoclinic class
(see e.g. Theorem 6.8 in \cite{vitorzeze}).
Hence, the triviality  of the centralizer stated in Corollary~\ref{cor:Lorenz} also holds for two other classes of Lorenz attractors. Indeed, Bautista and Rojas \cite{BR} proved that the contracting Lorenz attractors (also called Rovella attractors) are homoclinic classes, while
Metzger and Morales \cite{MM} proved that the multidimensional Lorenz attractors are also homoclinic classes (we refer the reader 
to \cite{BR,MM} for the definitions of the attractors).
\end{remark}

In what follows we discuss some applications of our result in the case of Hamiltonian vector fields.
In this setting, the triviality of the centralizer is not immediate from the denseness of periodic orbits whose periods are isolated. Indeed, given a Hamiltonian $H\in C^2(M,\mathbb R)$, any $K\in C^2(M,\mathbb R)$ such that $X_K\in \mathcal Z^1_\omega(X_H)$ is a first integral for the flow $(\varphi_t^H)_t$ (cf. Subsection~\ref{subsec:Hamilton}).
However is not clear \emph{a priori} that $K$ preserves the level sets $H^{-1}(e)$ and, even if this is the case, 
the first integrals $H$ and $K$ could be independent. 
Here we prove that 
$C^2$-generic Hamiltonians have trivial centralizer. More precisely: 

\begin{maincorollary}\label{hamil1} 
Let $(M^{2n},\omega)$ be a compact symplectic Riemannian manifold. 
If $n \geq 2$ then there exists a residual subset $\mathcal{R} \subset C^2(M, \mathbb R)$ such that 
the following holds: for every $H\in \mathcal R$ there 
exists a residual and full Lebesgue measure subset
$\mathfrak R_H \subset H(M)$ of energies such that if 
$e \in \mathfrak R_H$ then the centralizer of the
Hamiltonian flow  $(\varphi_t^H)_t$ is trivial on each connected component  $\mathcal E_{H,e} \subset H^{-1}(e)$. 
\end{maincorollary}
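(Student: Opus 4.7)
The plan is to deduce Corollary~\ref{hamil1} from Theorem~\ref{thm:criteriumH} applied to the restriction of the Hamiltonian vector field $X_H$ to each connected component $\mathcal E_{H,e}$ of a generic regular energy level. To do so I need a residual set of Hamiltonians for which, on a residual full Lebesgue measure set of energies, every such component is simultaneously transitive and accumulated by hyperbolic periodic orbits.

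First I would assemble two residual sets in $C^2(M,\mathbb R)$. The first, $\mathcal R_1$, is furnished directly by the main result of \cite{BFRV}: for $H \in \mathcal R_1$ there is a residual full Lebesgue measure set $\mathfrak R_H^{(1)} \subset H(M)$ of regular energies such that each connected component $\mathcal E_{H,e}$ of $H^{-1}(e)$ is invariant, compact, and the restricted flow $\varphi_t^H|_{\mathcal E_{H,e}}$ is transitive. The second, $\mathcal R_2$, is obtained from a Pugh--Robinson closing lemma in the symplectic category together with a Kupka--Smale type theorem for Hamiltonians: for $H \in \mathcal R_2$ and $e$ in a full Lebesgue measure residual set $\mathfrak R_H^{(2)}$, the nonwandering set of $\varphi^H_t|_{H^{-1}(e)}$ equals the closure of the hyperbolic periodic orbits of the restricted flow. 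Setting $\mathcal R = \mathcal R_1 \cap \mathcal R_2$ and $\mathfrak R_H = \mathfrak R_H^{(1)} \cap \mathfrak R_H^{(2)}$, transitivity on each component $\mathcal E_{H,e}$ forces its nonwandering set to equal $\mathcal E_{H,e}$ itself, so hyperbolic periodic orbits are dense there.

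With these hypotheses in place, fix $H \in \mathcal R$, $e \in \mathfrak R_H$, a component $\Lambda = \mathcal E_{H,e}$, and let $X_K \in \mathcal Z^1_{\omega}(X_H)$. By definition $\{K,H\}=0$, and by the identity $[X_K, X_H] = X_{\{H,K\}}$ recalled in Subsection~\ref{subsec:Hamilton} we get $[X_K, X_H] = 0$, so $X_K \in \mathcal Z^1(X_H)$ in the sense of \eqref{eq:defcentX}. Theorem~\ref{thm:criteriumH} applied to $X_H\!\mid_\Lambda$ then produces a constant $c = c(H,e,\Lambda) \in \mathbb R$ such that $X_K(x) = c\, X_H(x)$ for every $x \in \Lambda$; this is precisely the triviality of the Hamiltonian centralizer on each connected component, and the corollary follows.

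The main obstacle is the genericity statement underlying $\mathcal R_2$. In the symplectic setting periodic orbits of $X_H$ can be elliptic, and elliptic periodic orbits persist under $C^2$-perturbations, so one cannot hope to make \emph{all} periodic orbits hyperbolic generically; one must instead establish that the hyperbolic ones are already dense in the nonwandering set of generic regular energy levels. This requires combining a symplectic closing lemma (to produce periodic orbits near any nonwandering point by small $C^2$-perturbations compatible with $\omega$ and with the level-set structure) with a careful hyperbolization argument showing that, generically, elliptic periodic orbits can be approximated by hyperbolic ones without creating obstructions at other scales -- this is the technical heart of the argument, while the passage from Theorem~\ref{thm:criteriumH} to the Hamiltonian conclusion is essentially formal.
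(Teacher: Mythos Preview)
Your proposal is correct and follows essentially the same route as the paper: invoke \cite{BFRV} for transitivity on generic energy levels, combine the Pugh--Robinson symplectic closing lemma with a Kupka--Smale/Robinson genericity result (the paper cites \cite{R0}, \cite{N} and \cite[Theorem~11.5]{PughRob}) to obtain density of hyperbolic periodic orbits on generic components, and then feed this into Theorem~\ref{thm:criteriumH} after noting that $\{K,H\}=0$ forces $[X_K,X_H]=0$. Your identification of the density of hyperbolic periodic orbits as the only substantive step, and of the elliptic/hyperbolic dichotomy as the reason one cannot simply quote Kupka--Smale, matches the paper's treatment exactly.
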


As Anosov energy levels persist by $C^1$-perturbations of the vector field and volume preserving
Anosov flows are transitive, the following is an immediate consequence from Theorem~\ref{thm:criteriumH}:

\begin{maincorollary}\label{hamil2} Let $(M^{2n},\omega)$ be a compact symplectic Riemannian manifold with $n\ge 2$,
let $H_0 \in C^r(M, \mathbb R)$ ($r\ge 2$) and let $e_0\in \mathbb R$ be such that the Hamiltonian flow 
$(\varphi_t^{H_0})_t$ restricted to a connected component  $\mathcal E_{H_0,e_0} \subset H_0^{-1}(e_0)$ is Anosov.
There is a $C^r$-open neighborhood $\mathcal U$ of $H_0$ and a open neighborhood $V\subset \mathbb R$ of $e_0$
such that for every $(H, e) \in \mathcal U \times V$ the flow $(\varphi^{H}_t)_t$ on has trivial centralizer on the connected component $\mathcal E_{ H,e} \subset \tilde H^{-1}(\tilde e)$ obtained as an analytic 
continuation of $\mathcal E_{H_0,e_0}$.
\end{maincorollary}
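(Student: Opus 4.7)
The plan is to reduce Corollary~\ref{hamil2} to a direct application of Theorem~\ref{thm:criteriumH}. After choosing the neighborhoods $\mathcal U$ and $V$ small enough, I need to verify three things on the continued component $\mathcal E_{H,e}$: that it is a compact $(\varphi^H_t)_t$-invariant submanifold on which the flow is Anosov; that it is transitive and that hyperbolic periodic orbits are dense in it; and that every element of the Hamiltonian centralizer $\mathcal{Z}^r_{\omega}(X_H)$ also belongs to the standard vector field centralizer $\mathcal{Z}^1(X_H)$ defined in \eqref{eq:defcentX}.

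For the persistence step, note that on the Anosov energy level the Hamiltonian flow has no singularities, so $dH_0$ does not vanish on $\mathcal E_{H_0,e_0}$ and $e_0$ is a regular value of $H_0$ along this connected component. The implicit function theorem then produces, for $(H,e)$ in a sufficiently small $C^r \times \mathbb R$-neighborhood of $(H_0,e_0)$, a compact submanifold $\mathcal E_{H,e}\subset H^{-1}(e)$ diffeomorphic to $\mathcal E_{H_0,e_0}$ via a diffeomorphism $\Psi_{H,e}$ close to the identity. Pulling back $X_H|_{\mathcal E_{H,e}}$ by $\Psi_{H,e}$ yields a vector field on $\mathcal E_{H_0,e_0}$ that is $C^1$-close to $X_{H_0}|_{\mathcal E_{H_0,e_0}}$, so the $C^1$-openness of the Anosov property guarantees that $(\varphi^H_t)_t$ restricted to $\mathcal E_{H,e}$ is still Anosov. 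This symplectic continuation, in which both the ambient vector field and its invariant hypersurface are perturbed simultaneously, is the step that requires the most care and is what I see as the main obstacle of the proof.

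Once the persistence is in hand, the Gelfand--Leray construction on $(M,\omega)$ yields a smooth $(\varphi^H_t)_t$-invariant finite measure of full support on $\mathcal E_{H,e}$. Since a volume-preserving Anosov flow cannot possess proper attractors or repellers, its spectral decomposition reduces to a single basic set and the flow is therefore transitive on the connected component $\mathcal E_{H,e}$; density of hyperbolic periodic orbits then follows from the Anosov closing lemma, so $\Lambda = \mathcal E_{H,e}$ meets the hypotheses of Theorem~\ref{thm:criteriumH}. Finally, any $X_K \in \mathcal{Z}^r_{\omega}(X_H)$ satisfies $\{K,H\}=0$, which together with the identity $[X_K,X_H]=X_{\{H,K\}}$ recalled in Subsection~\ref{subsec:Hamilton} gives $[X_H,X_K]=0$, i.e.\ $X_K \in \mathcal{Z}^1(X_H)$. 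Applying Theorem~\ref{thm:criteriumH} to $\Lambda=\mathcal E_{H,e}$ then produces $c \in \mathbb R$ with $X_K|_{\mathcal E_{H,e}} = c\, X_H|_{\mathcal E_{H,e}}$, which is precisely the desired triviality of the Hamiltonian centralizer on the continued energy level.
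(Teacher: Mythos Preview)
Your proposal is correct and follows exactly the route the paper indicates: the paper does not give a separate proof of Corollary~\ref{hamil2} but simply remarks, immediately before the statement, that ``Anosov energy levels persist by $C^1$-perturbations of the vector field and volume preserving Anosov flows are transitive,'' so the result is ``an immediate consequence from Theorem~\ref{thm:criteriumH}.'' Your write-up unpacks precisely these two ingredients (persistence via the implicit function theorem and structural stability, transitivity via the invariant Liouville volume and the spectral decomposition) and adds the observation $\mathcal{Z}^r_{\omega}(X_H)\subset\mathcal{Z}^1(X_H)$ needed to invoke Theorem~\ref{thm:criteriumH}; there is no substantive difference in approach.
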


One should note that in the case of surfaces one cannot avoid the presence of elliptic islands 
and, in particular, the triviality of the centralizer cannot be expected to hold in general (cf. Example~\ref{centcentror2}). 

\smallskip 
We proceed
by stating  a result on the $C^0$-centralizer of continuous flows. 
Despite the fact that the proof of Theorem~\ref{thm:criteriumH} makes use of some topological arguments, the $C^1$-smoothness
of the vector field (and the hyperbolicity of a dense subset of periodic orbits) is used to assure that the invariant set 
$\Lambda$ is preserved by elements in the centralizer, and that hyperbolic periodic orbits are also preserved
(cf. Lemma~\ref{le:hip} and the argument following it). 
For general continuous flows, invariant sets may not be preserved by elements of the centralizer (cf. Example~\ref{centcentror2}). 
Combining arguments used in the proof of Theorem~\ref{thm:criteriumH} and in \cite{BRV} we overcome this difficulty and obtained 
the following result: 

\begin{maintheorem}\label{thm:B} 
Let $M$ be a compact and connected Riemannian manifold without boundary, and let $(X_t)_t$ be a continuous flow 
such that:
\begin{enumerate}
\item $(X_t)_t$ has at most finitely many equilibrium points, 
\item $\text{Per}((X_t)_t)$ is dense in $M$,
\item $(X_t)_t$ is transitive, and
\item for every $T>0$ the set of periodic orbits of $(X_t)_t$ with period smaller than $T$ 
form a subset of $M$ with finitely many disjoint closed curves.
\end{enumerate}
Then $\mathcal{Z}^0((X_t)_t)$ is trivial.
\end{maintheorem}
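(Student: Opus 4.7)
The plan is to take an arbitrary $(Y_s)_s\in\mathcal{Z}^0((X_t)_t)$ and produce a single constant $c\in\mathbb R$ such that $Y_s=X_{cs}$ on $M$, by first reducing to the critical elements (where $Y$ acts rigidly), then promoting to a continuous reparametrization on the regular set, and finally using transitivity. First I would handle singularities and periodic orbits: commutativity gives $X_t(Y_s(\sigma))=Y_s(\sigma)$ for every singularity $\sigma$, so the continuous curve $s\mapsto Y_s(\sigma)$ lies in $Sing(X)$ and, by hypothesis (1), is constant. For a periodic point $p$ of minimal period $T_0>0$ and orbit $\gamma_p$, the identity $X_{T_0}(Y_s(p))=Y_s(p)$ forces $Y_s(p)$ onto a periodic orbit of period at most $T_0$; hypothesis (4) makes the union of such orbits a finite disjoint union of closed curves, so the connected continuous curve $\{Y_s(p):s\in\mathbb R\}$ must remain in the component $\gamma_p$ containing $p$. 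Lifting continuously along the covering $t\mapsto X_t(p)$ of $\mathbb R/T_0\mathbb Z$ and using the one-parameter group property, the lift is additive and continuous, hence linear, yielding $Y_s(p)=X_{c_p s}(p)$ for a unique slope $c_p\in\mathbb R$ depending only on $\gamma_p$.

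The main obstacle is to promote $p\mapsto c_p$, initially defined on the dense set $\text{Per}((X_t)_t)$, to a continuous function $h\colon M\setminus Sing(X)\to\mathbb R$ satisfying $Y_s(x)=X_{h(x)s}(x)$; this is the $C^0$ counterpart of the main step of \cite{BRV}. For this I would use a flow-box rectification around every regular point $x$: there exist a topological transversal $\Sigma\ni x$ and $\varepsilon>0$ so that $(\sigma,\tau)\mapsto X_\tau(\sigma)$ is a homeomorphism from $\Sigma\times(-\varepsilon,\varepsilon)$ onto a neighborhood $U$ of $x$ in which the flow-time coordinate of $X_a(\sigma)\in U$ is exactly $a$ for $|a|<\varepsilon$. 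Pick $p_n\to x$ with $p_n$ periodic (by hypothesis (2)) and $s_0>0$ small enough that uniform continuity of $(s,y)\mapsto Y_s(y)$ on $[-1,1]\times M$ forces $Y_{s_0}(p_n)\in U$ for all large $n$; the flow-box coordinates then exhibit $c_{p_n}s_0$ as the flow-time coordinate of $Y_{s_0}(p_n)$, and since $Y_{s_0}(p_n)\to Y_{s_0}(x)$ the sequence $c_{p_n}$ converges to some $c=c(x)\in\mathbb R$, independent of the subsequence by uniqueness of the flow-time parameter on the transversal. Passing to the limit in $Y_s(p_n)=X_{c_{p_n}s}(p_n)$ gives $Y_s(x)=X_{cs}(x)$ for every $s\in\mathbb R$, and setting $h(x):=c$ produces a well-defined continuous function on $M\setminus Sing(X)$ with the required formula.

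Finally, combining $Y_s\circ X_t=X_t\circ Y_s$ with $Y_s(y)=X_{h(y)s}(y)$ and using the local rectification once more to compare flow-times forces $h(X_t(x))=h(x)$ for every $t$, so $h$ is a continuous first integral of $(X_t)_t$. Transitivity, hypothesis (3), then forces $h\equiv c$ to be constant, and therefore $Y_s=X_{cs}$ on the dense open set $M\setminus Sing(X)$; extending by continuity (using $Y_s(\sigma)=\sigma=X_{cs}(\sigma)$ on $Sing(X)$) gives the identity on all of $M$. The core technical difficulty, relative to Theorem~\ref{thm:criteriumH}, is the continuous propagation in the middle paragraph: lacking hyperbolicity of periodic points or a $C^1$ structure on $Y$, the finite-disjoint-curves structure of hypothesis (4) together with a quantitative flow-box argument is what replaces the hyperbolic/$C^1$ input used in the smooth setting.
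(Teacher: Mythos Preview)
Your approach is genuinely different from the paper's. The paper does \emph{not} define the reparametrization first on periodic orbits and then extend by density; instead it first proves, via a separate lemma that uses assumptions (2) and (4) together with the arc-convergence Lemma~\ref{le:C0flowbox}, that $(Y_s)_s$ preserves \emph{every} $(X_t)_t$-orbit, including non-periodic ones. It then works exclusively on $M_0=M\setminus(\cE\cup\cP)$, the \emph{non}-periodic regular points, where the time $\tau(x,t)$ with $Y_t(x)=X_{\tau(x,t)}(x)$ is automatically unique; additivity and continuity of $\tau(x,\cdot)$ force $\tau(x,t)=h(x)t$, and continuity of $h$ is proved by a direct case analysis (bounded versus unbounded $h(x_n)$) that never refers to the slopes $c_p$. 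Your route --- extending $c_p$ from the dense periodic set by a flow-box limit --- is more in the spirit of the $C^1$ criterion and entirely bypasses the paper's orbit-preservation lemma; the paper's route, in turn, avoids any modular ambiguity coming from periods.

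There is, however, one real gap in your middle paragraph. The claim that ``the flow-box coordinates then exhibit $c_{p_n}s_0$ as the flow-time coordinate of $Y_{s_0}(p_n)$'' tacitly assumes $|c_{p_n}s_0+\tau_n|<\varepsilon$, i.e.\ that the arc $X_{[0,c_{p_n}s_0]}(p_n)$ has not left the flow box and re-entered through another point of $\Sigma$. A~priori nothing prevents $c_{p_n}\to\infty$: one could have $c_{p_n}s_0=T_n\to\infty$ with $Y_{s_0}(p_n)=X_{T_n}(p_n)=p_n\in U$, and then the flow-time coordinate of $Y_{s_0}(p_n)$ is $\tau_n$, not $c_{p_n}s_0+\tau_n$. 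To close this you should (i) choose $s_0$ so that the whole arc $Y_{[0,s_0]}(p_n)$ stays in $U$ (this \emph{does} follow from your uniform-continuity choice), and (ii) observe that $s\mapsto\pi_\Sigma(Y_s(p_n))$ is a continuous map from the connected interval $[0,s_0]$ into the \emph{finite} set $\gamma_{p_n}\cap\Sigma$ (finite because consecutive hits of any orbit on $\Sigma$ are at least $\varepsilon$ apart in time), hence constant equal to $\sigma_n$. This forces $\pi_t(Y_s(p_n))=c_{p_n}s+\tau_n\in(-\varepsilon,\varepsilon)$ for all $s\in[0,s_0]$, giving the bound on $c_{p_n}$ and the convergence you need. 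With this addition your argument is complete; the same discreteness-plus-connectedness trick also underpins the continuity of $h$ that you assert at the end of that paragraph.
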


Finally, it is natural to ask whether 
our results 
extent to the setting of transitive and locally free 
$\mathbb R^d$-actions with a dense set of closed orbits. 
Although some open classes of $\mathbb R^d$-actions can be obtained as suspensions of  
$\mathbb Z^d$-actions (see e.g. \cite{BRV} and references therein), to the best of our knowledge, 
the existence of closed orbits is itself a wide open problem in this context with a single contribution in 
the case of $\mathbb R^2$ actions on $3$-manifolds~\cite{RW}.

\medskip
This paper is organized as follows. In Section~\ref{sec:examples} we provide some examples that illustrate
our main results. Section~\ref{sec:proofs} is devoted to the proof of the main results. Indeed, Theorem~\ref{thm:criteriumH}, 
its consequences (Corollaries~\ref{ks}, \ref{cor:Lorenz}, \ref{hamil1} and \ref{hamil2}) and Theorem~\ref{thm:B} are proven in
Subsection~\ref{sec:homoclinic}, Subsection~\ref{sec:applications} and Subsection~\ref{sec:Alinha}, respectively.

\section{Examples}\label{sec:examples}

In this section we give some examples 
that illustrate our results. In rough terms, the tubular flowbox theorem ensures that a smooth vector field is conjugated
to a constant vector field in small neighborhoods of regular orbits (see the precise statement in Example~\ref{centfluxotubular} below). 
In particular, the following simple example of constant vector fields describes the possible local symmetries of smooth vector fields.

\begin{example}\label{centfluxotubular}
Let $X \in \mathfrak{X}^r(\mathbb{R}^n)$ be the constant vector field $X(x) = (1, \, 0, \, \cdots, \, 0)$. If $Y \in \mathcal{Z}^r(X)$ is written in coordinates as $Y(x) = (Y_1(x), \, \cdots, \, Y_n(x))$ and the vector fields $X$ and $Y$ commute
then, 
in local coordinates,
\begin{align*}
0 = [X, \, Y] & = \sum\limits_{i = 1}^n \left[\sum\limits_{j = 1}^n \left(X_j \frac{\partial Y_i}{\partial x_j} - Y_j \frac{\partial X_i}{\partial x_j} \right)\right] \frac{\partial}{\partial x_i} \\
              & = \sum\limits_{i = 1}^n \frac{\partial Y_i}{\partial x_1} \frac{\partial}{\partial x_i}
               = \left(\frac{\partial Y_1}{\partial x_1}, \, \frac{\partial Y_2}{\partial x_1}, \, \cdots, \, \frac{\partial Y_n}{\partial x_1}\right).
\end{align*}
In other words, $X$ and $Y$ commute if and only if $\frac{\partial Y_i}{\partial x_1} = 0$ for all $1 \leq i \leq n$ or, equivalently, the vector field $Y$ does not depend on the $x_1$-coordinate, hence it is constant along
the orbits of the constant vector field.
\begin{figure}[hbt]
       \includegraphics[scale = 0.6]{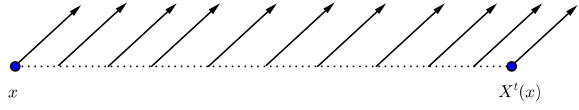}
	\caption{Local rigidity on flowbox charts caused by commutativity}
\end{figure}

If $M$ is a Riemannian manifold of dimension $n$, $Z \in \mathfrak{X}^r(M)$, $r \geq 1$, and $p \in M$ is a regular 
point for $Z$, the tubular flowbox theorem (cf. \cite{palisdemelo}) assures the existence of a neighborhood  $V_p$ of 
$x$ in $M$ and a $C^r$-diffeomorphism $g: V_p \rightarrow h(V_p) \subset \mathbb{R}^n$ so that 
$g_* Z=X\mid_{h(V_p)}$. If the vector field $\tilde Z\in \mathfrak{X}^r(M)$ commutes with $Z$ on $V_p$ 
then $0 = [Z,\tilde Z]  = [g_* Z,g_* \tilde Z] = [X,g_* \tilde Z]$. Here we used that Lie brackets are invariant by 
induced vector fields (see e.g. \cite[Corollary 8.31]{Lee}). The latter implies that the vector field $g_* \tilde Z$ does not
depend on the $x_1$-coordinate in the local coordinates of $g(V_p)\subset \mathbb R^n$.
\end{example}

Now we note that commuting Hamiltonian vector fields may not preserve level sets, which justifies the 
definition of the Hamiltonian centralizer.

\begin{example}\label{commuteR2H}
Consider the space $\mathbb R^2$ endowed with the usual symplectic form $\omega=dx \wedge dy$. Then, it is not hard to check that for any $C^{r+1}$ Hamiltonian $H: \mathbb R^2 \to \mathbb R$ the Hamiltonian vector field $X_H \in \mathfrak{X}^r(M)$ can be written as  $X_H(x,y)=\left(-\frac{\partial H(x,y)}{\partial y}, \frac{\partial H(x,y)}{\partial x}\right)$, 
for every $(x,y)\in \mathbb R^2$. It is not hard to check that the Hamiltonian flows $(\varphi^H_t)_t$, $(\varphi^K_t)_t$
generated by  $H, K: \mathbb R^2 \to \mathbb R$, $H(x,y)=x$ and $K(x,y)=y$, commute. However, the flow 
$(\varphi^K_t)_t$ does not preserve level sets of $(\varphi^H_t)_t$. In fact, the corresponding Poisson bracket is given by
$$
\{H, \, K\} 
   = \dfrac{\partial H}{\partial x} \, \dfrac{\partial K}{\partial y}  -  \dfrac{\partial H}{\partial y} \, \dfrac{\partial K}{\partial x} 
   =1 \neq 0.
$$
\end{example}

The triviality of the centralizer cannot be expected in general even for vector fields on surfaces. One can produce an example on the sphere $\mathbb S^2$ by a simple modification of the following vector field on the plane. 

\begin{example}\label{centcentror2} Consider the linear vector field in $\mathbb{R}^2$ given by $X(x, \, y) = (-y, \, x)$,
that generates the flow $(X_t)_{t\in \mathbb R}$ such that $X_t$ is the rotation of angle $t$ in $\mathbb R^2$. 
If $Z(x, \, y) = (a x + b y, \, c x + d y)$ is a linear vector field in $\mathbb{R}^2$ that commutes with $X$ then
\begin{align*}
0 = [X, \, Z] 
& = \Big(X_1 \dfrac{\partial Z_1}{\partial x} -Z_1 \dfrac{\partial X_1}{\partial x} 
	+ X_2 \dfrac{\partial Z_1}{\partial y} - Z_2 \dfrac{\partial X_1}{\partial y},  \\
&	\;\;\qquad  
	X_1 \dfrac{\partial Z_2}{\partial x} - Z_1 \dfrac{\partial X_2}{\partial x} 
	+ X_2 \dfrac{\partial Z_2}{\partial y} - Z_2 \dfrac{\partial X_2}{\partial y}  \Big)\\
              & = \big((b + c)x + (d-a)y, \;\; (d - a)x - (b+c)y \big),
\end{align*}
that is, $a = d$ and $c = -b$. Thus, the vector field $X$ commutes with the family of linear vector fields 
\begin{equation}\label{eq:Zab}
Z_{a,b}(x, \, y) = (a x + b y, \, -b x + a y)
\end{equation}
 for every $(x,y)\in \mathbb R^2$ ($a,b\in\mathbb R$). 
The centralizer of the linear vector field $X$ contains also nonlinear vector fields. For instance, it is not
hard to check that the vector field $Y\in \mathfrak{X}^{\infty}(\mathbb{R}^2)$ given by $Y(x, \, y) = \left(y + x(1 - x^2 - y^2), \; -x + y(1 - x^2 - y^2)\right)$ is such that $[X, \, Y] = 0$. 
\begin{figure}[htb]\label{fig1}
\begin{center}
       \includegraphics[scale = 0.4]{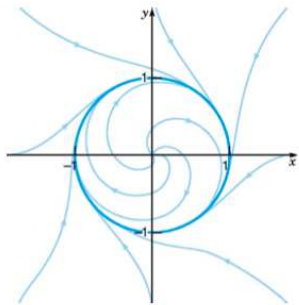}
	\caption{Representation of some solutions of the flow generated by the non-linear vector field $Y(x, \, y) = \left(y + x(1 - x^2 - y^2), \; -x + y(1 - x^2 - y^2)\right)$, $(x,y)\in \mathbb R^2$.}
\end{center}
\end{figure}
This shows that the one parameter group of rotations on $\mathbb{R}^2$, which correspond to the solutions of
the vector field $X$, are rotational symmetries for the solutions of the flow generated by $Y \in \mathcal{Z}^r(X)$
(see Figure~2).
In particular, it is clear that invariant sets may not be preserved by elements of the centralizer.
For instance, any annulus $\mathbb D$ centered at the origin is preserved by the flow $(X_t)_t$ while the 
flow generated by the vector field $Y(x, \, y) = (x, y)$ commutes with $(X_t)_t$ but does not preserve $\mathbb D$.
\end{example}

In the next example we consider the Hamiltonian centralizer of a linear center in $\mathbb R^2$.

\begin{example}
Consider the linear vector field $X \in \mathfrak{X}^1(\mathbb R^2)$ given in the previous example. 
It is a Hamiltonian
vector field: $X=X_H$ for 
$H\in C^{2}(\mathbb R^2,\mathbb R)$ given by $H(x,y) = \frac12 ({x^2 + y^2})$.
If $Y \in \mathcal{Z}^1_{\omega}(X)$, there exists $K \in C^2(\mathbb R^2, \, \mathbb{R})$ such that $Y := Y_K
=(-\frac{\partial K}{\partial y}, \frac{\partial K}{\partial x})$ 
and 
\begin{align*}
0 & = \{H, \, K\} 
   = \dfrac{\partial H}{\partial x} \, \dfrac{\partial K}{\partial y}  -  \dfrac{\partial H}{\partial y} \, \dfrac{\partial K}{\partial x} 
   = {x} \, \dfrac{\partial K}{\partial y}  -  {y} \, \dfrac{\partial K}{\partial x} 
   =  \left\langle (x, \, y), \,\, \left(\dfrac{\partial K}{\partial y}, \, -\dfrac{\partial K}{\partial x}\right) \right\rangle.
\end{align*}
Hence, the vectors $(x, \, y)$ and $\left(\dfrac{\partial K}{\partial y}, \, -\dfrac{\partial K}{\partial x}\right)$ are orthogonal.
As the vector field $X_H(x,y)=(-y,x)$ is also orthogonal to $(x,y)$ then the vector fields $X_H$ and $X_K$ are collinear
and, consequently, there exists 
$\kappa: \mathbb{R}_+ \rightarrow \mathbb{R}$ continuous 
such that $X_K(x,y) = \kappa(\|(x,y)\|) \cdot X_H(x,y)$ for all $(x,y) \in \mathbb{R}^2 \setminus\{(0,0)\}$. In particular, since the orbits of the flow $(\varphi^H_t)_t$ are circles centered at the origin, the orbit of each point $(x,y) \in \mathbb{R}^2 \setminus \{(0,0)\}$ by the flow $(\varphi^K_t)_t$  is either a singularity, in which case all points in a circle $\gamma$ centered at the origin passing through $(x,y)$ are also singularities, or is periodic and the orbit coincides with the circle $\gamma$. Moreover, 
$(0,0)$ is a singularity for the vector field $X_K$.
Hence, the Hamiltonian centralizer of $X_H$ is smaller than its centralizer in the space of all vector fields 
described in  Example~\ref{centcentror2} (since there are non-volume preserving flows in the family ~\eqref{eq:Zab}).
Nevertheless, the centralizer $\mathcal Z^1(X_H)$ is not trivial as, for instance, the Hamiltonian $K(x,y)=\frac12(x^2+y^2)^2$
satisfies $X_K(x,y)= \kappa(\|(x,y)\|) X_H(x,y)$, where $\kappa(z)=2z^2$ tends to zero as $z \to 0^+$, 
is continuous but it is not constant.
\end{example}

\section{Proofs} \label{sec:proofs}

\subsection{Proof of Theorem~\ref{thm:criteriumH}} \label{sec:homoclinic}

Fix an integer $r\ge 1$ and $X \in \mathfrak{X}^r(M)$, and take $Y \in \mathcal{Z}^1(X)$. 
Assume that the vector fields $X$ and $Y$ generate the flows $(X_t)_t$ and $(Y_s)_s$, respectivelly. By
the commutative relation  $X_t \circ Y_s = Y_s \circ X_t$ for all $t, \, s \in \mathbb{R}$ it follows
that  
the diffeomorphism $Y_s$ is a conjugation between the flow $(X_t)_t$ with itself, for every $s \in \mathbb{R}$.
Hence, if $p\in M$ is a periodic point of period $\pi(p)>0$ for $(X_t)_t$ ($\gamma_p$ denotes its orbit) 
and $s\in \mathbb R$ is arbitrary then
\begin{equation}\label{eq:periodicp}
X_{\pi(p)}(Y_s(p)) = Y_s(X_{\pi(p)} (p))  = Y_s(p). 
\end{equation}
We make use of the following simple lemma.

\begin{lemma}\label{le:hip}
Let $M$ be a compact Riemannian manifold and assume that the 
$C^1$ flow $(Y_s)_s$ on $M$ belongs to the $C^1$-centralizer of the $C^1$-flow $(X_t)_t$. 
Then, for every $s\in \mathbb R$ and every hyperbolic critical element $p \in \text{Crit}((X_t)_t)$ 
one has $Y_s (\gamma_p)=\gamma_p$.
\end{lemma}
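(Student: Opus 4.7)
The plan is to treat the two kinds of hyperbolic critical elements separately, reducing each case to a local uniqueness statement for the fixed point set of $X_{\pi(p)}$ (resp.\ of all $X_t$).

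Suppose first that $p$ is a hyperbolic singularity, so $\gamma_p=\{p\}$ and $X_t(p)=p$ for every $t$. The commutativity relation yields $X_t(Y_s(p))=Y_s(X_t(p))=Y_s(p)$ for every $t\in\mathbb{R}$, and hence $Y_s(p)\in\text{Sing}(X)$ for every $s\in\mathbb{R}$. Hyperbolicity forces $DX(p)$ to be invertible, so $p$ is an isolated zero of $X$. The continuous curve $s\mapsto Y_s(p)$ emanates from $p$ and takes values in the discrete set $\text{Sing}(X)$ near $p$, so it must be identically equal to $p$, and the conclusion follows.

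Assume now that $p$ is a hyperbolic periodic point with period $\pi:=\pi(p)>0$. Equation~(\ref{eq:periodicp}) shows that $Y_s(p)$ is a fixed point of $X_\pi$ for every $s\in\mathbb{R}$. The key local fact I would establish is that $\gamma_p$ coincides locally with the fixed point set of $X_\pi$. By hyperbolicity of $\gamma_p$ the monodromy $DX_\pi(p)$ acts as the identity on $E^0_p=\mathbb{R}X(p)$ and as a hyperbolic isomorphism on $E^s_p\oplus E^u_p$, so the kernel of $DX_\pi(p)-I$ is exactly the flow direction and its image is $E^s_p\oplus E^u_p$. Writing local coordinates that split $E^0_p$ from $E^s_p\oplus E^u_p$ and applying the implicit function theorem to the ``hyperbolic component'' of $X_\pi-\mathrm{Id}$ produces an open neighborhood $V$ of $p$ such that $\mathrm{Fix}(X_\pi)\cap V=\gamma_p\cap V$. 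Equivalently, one can pass to a small Poincar\'e cross-section at $p$ and invoke that $p$ is an isolated hyperbolic fixed point of the first return map.

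To promote this local information to a global one, consider the set $S:=\{s\in\mathbb{R}:Y_s(p)\in\gamma_p\}$. It contains $0$ and is closed by continuity of $s\mapsto Y_s(p)$ and compactness of $\gamma_p$. To see that $S$ is open at any $s_0\in S$, write $Y_{s_0}(p)=X_{\tau_0}(p)$ and use the group law together with commutativity to get $Y_{s_0+\epsilon}(p)=X_{\tau_0}(Y_\epsilon(p))$ for every $\epsilon\in\mathbb{R}$; this reduces the question to showing $Y_\epsilon(p)\in\gamma_p$ for all small $\epsilon$, which follows from the local uniqueness above since $Y_\epsilon(p)$ lies in $V$ and is a fixed point of $X_\pi$. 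Therefore $S=\mathbb{R}$ by connectedness, and $Y_s(\gamma_p)=\{X_t(Y_s(p)):t\in\mathbb{R}\}=\gamma_p$, with equality because the point $Y_s(p)\in\gamma_p$ sweeps out the entire orbit under $(X_t)_t$. The main obstacle I anticipate is precisely the local statement $\mathrm{Fix}(X_\pi)\cap V=\gamma_p\cap V$: although geometrically clear from the normal hyperbolicity of $\gamma_p$, it requires some care since $DX_\pi(p)-I$ is not invertible, so one must either split off the flow direction and apply the implicit function theorem to the transverse hyperbolic part, or transfer the question to the Poincar\'e return map.
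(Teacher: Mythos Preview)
Your argument is correct, but it follows a different route from the paper's proof. The paper differentiates the commutation relation $X_{\pi(p)}\circ Y_s=Y_s\circ X_{\pi(p)}$ to obtain that $DX_{\pi(p)}(p)$ and $DX_{\pi(p)}(Y_s(p))$ are linearly conjugate via $DY_s(p)$; this shows directly that each $Y_s(p)$ is itself a \emph{hyperbolic} periodic point of period $\pi(p)$, and then invokes the fact that hyperbolic periodic orbits of a fixed period are isolated to conclude (via continuity in $s$) that $Y_s(\gamma_p)=\gamma_p$. The paper does not treat the singularity case separately.

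Your approach never differentiates $Y_s$: you only use that $Y_s(p)\in\mathrm{Fix}(X_\pi)$ and then establish, via the implicit function theorem (or the Poincar\'e section), the local identity $\mathrm{Fix}(X_\pi)\cap V=\gamma_p\cap V$, after which the open--closed argument finishes the job. The trade-off is clear: the paper's proof is shorter because it outsources the local analysis to the cited isolation fact, but it genuinely uses the $C^1$ regularity of $(Y_s)_s$ to transport hyperbolicity. Your argument spends more effort on the local structure of $\mathrm{Fix}(X_\pi)$, but in return it never needs $DY_s$, so it actually yields the stronger conclusion that any $C^0$ flow commuting with $(X_t)_t$ must preserve each hyperbolic critical orbit of $(X_t)_t$. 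This is closer in spirit to the techniques used later in the paper for Theorem~\ref{thm:B}.
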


\begin{proof}
Let $p\in \text{Per}((X_t)_t)$ be a hyperbolic periodic point of prime period $\pi(p)>0$ and take $s\neq 0$. 
Differentiating $X_{\pi(p)} \circ Y_{s} =Y_s \circ X_{\pi(p)}$  
we obtain
$$
D X_{\pi(p)} (Y_{s} (p)) \; DY_s(p) =DY_s ( X_{\pi(p)}(p)) \; DX_{\pi(p)}(p)
	= DY_s ( p) \; DX_{\pi(p)}(p)
$$
which shows, since $Y_s$ is a diffeomorphism, that $DX_{\pi(p)}(p)$ and $D X_{\pi(p)} (Y_{s} (p))$ are linearly 
conjugate.  This proves that $(Y_s(p))_{s\in \mathbb R}$ is a family of hyperbolic periodic points of period $\pi(p)$
for the flow $(X_t)_t$. Since hyperbolic periodic orbits of the same period are isolated we conclude that  $Y_s (\gamma_p)=\gamma_p$
for every $s\in \mathbb R$, which proves the lemma.
\end{proof}

The previous lemma assures that  every $Y\in \mathcal{Z}^1(M)$  preserves the set of hyperbolic
periodic orbits  in $\Lambda$ (and so it preserves $\Lambda$, if $\Lambda$ admits a dense subset of periodic orbits). 

Now, as vector fields $X$ and $Y$ are collinear on $\gamma_p$, for all
periodic points $p\in \Lambda$, we use that the periodic orbits of $(X_t)_t$ are dense in $M$ to derive that the vector fields 
$X$ and $Y$ are collinear on $\Lambda \setminus Sing(X)$. More precisely:

\begin{proposition}\label{prophconservativo} Let $r\ge 1$ and let $X \in \mathfrak{X}^r(M)$ generate a flow $(X_t)_t$
so that $\Lambda\subset M$ is a compact, $(X_t)_t$-invariant and transitive subset with a dense subset of hyperbolic
periodic orbits. For any $Y \in \mathcal{Z}^1(X)$ the flow $(Y_t)_t$ preserves $\Lambda$ and there exists a map 
$h: \Lambda \setminus Sing(X) \rightarrow \mathbb{R}$ such that $Y(x) = h(x) \cdot X(x)$ for all $x \in \Lambda \setminus Sing(X)$. Moreover, the following properties hold:
\begin{itemize}
\item[(a)] $h$ is uniquely defined,
\item[(b)] $h$ is constant along regular orbits of $X$, and
\item[(c)] $h$ is  continuous.
\end{itemize}
\end{proposition}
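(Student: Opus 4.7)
The plan is threefold: (i) I would first verify that $(Y_s)_s$ preserves $\Lambda$; (ii) then establish the pointwise collinearity $Y(p)=h(p)X(p)$ on the dense set of hyperbolic periodic points; and (iii) finally extend $h$ continuously to all of $\Lambda \setminus Sing(X)$ and read off properties (a)--(c).

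For (i), fix $s \in \mathbb R$. Lemma~\ref{le:hip} gives $Y_s(\gamma_p)=\gamma_p$ for every hyperbolic periodic orbit $\gamma_p \subset \Lambda$. Since by hypothesis these orbits are dense in $\Lambda$ and $Y_s$ is a homeomorphism, $Y_s$ preserves their common closure, which is $\Lambda$; applying the same to $Y_{-s}$ gives equality. For (ii), note that the orbit $\gamma_p$ of a hyperbolic periodic point $p$ is a one-dimensional $C^1$-submanifold with $T_p\gamma_p=\mathbb R\,X(p)$. Because the $C^1$-curve $s\mapsto Y_s(p)$ stays in $\gamma_p$, differentiating at $s=0$ yields $Y(p)\in \mathbb R\,X(p)$, so $Y(p)=h(p)X(p)$ with
\[
h(p)=\frac{\langle Y(p),X(p)\rangle}{\|X(p)\|^2}.
\]

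For (iii), given $x\in\Lambda\setminus Sing(X)$ I would choose hyperbolic periodic points $p_n\to x$ (such a sequence exists by density, and is automatically regular since each $p_n$ is periodic). Continuity of $X$ and $Y$ together with $X(x)\neq 0$ then give
\[
h(p_n)=\frac{\langle Y(p_n),X(p_n)\rangle}{\|X(p_n)\|^2}\;\longrightarrow\;\frac{\langle Y(x),X(x)\rangle}{\|X(x)\|^2}=:h(x),
\]
and passing to the limit in $Y(p_n)=h(p_n)X(p_n)$ produces $Y(x)=h(x)X(x)$. Uniqueness (a) is immediate because $X(x)\neq 0$, while continuity (c) follows from the quotient formula above on the open subset $\Lambda \setminus Sing(X)$ of $\Lambda$. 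For (b) I would differentiate the commutation relation $Y_s\circ X_t=X_t\circ Y_s$ at $s=0$ to obtain $Y(X_t(x))=DX_t(x)\,Y(x)$; substituting $Y(x)=h(x)X(x)$ and using $DX_t(x)X(x)=X(X_t(x))$ (invariance of $X$ under its own flow) yields $h(X_t(x))X(X_t(x))=h(x)X(X_t(x))$, whence $h(X_t(x))=h(x)$ since $X_t(x)$ remains regular.

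Once Lemma~\ref{le:hip} is available no step is genuinely hard; the only subtle point is to pick the right formula for $h$ so that the numbers $h(p_n)$ converge rather than merely stay bounded. The intrinsic expression $h=\langle Y,X\rangle/\|X\|^2$ accomplishes this without any subsequence extraction and simultaneously delivers continuity. Step (i) is what legitimates speaking of $h$ as a function on $\Lambda$ rather than only on its dense periodic subset.
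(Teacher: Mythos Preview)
Your proof is correct and follows the same overall structure as the paper: invoke Lemma~\ref{le:hip} for invariance of $\Lambda$, establish collinearity on the dense set of hyperbolic periodic points, extend by continuity, and verify (a)--(c). The arguments for (a) and (b) are essentially identical to the paper's.

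The genuine difference lies in the proof of continuity (c). The paper argues by contradiction: assuming $h$ is discontinuous at some regular point $x$, it extracts a sequence $x_n\to x$ with $|h(x_n)-h(x)|$ bounded away from zero, notes that $|h(x_n)|=\|Y(x_n)\|/\|X(x_n)\|$ is bounded, passes to a convergent subsequence, and then uses a tubular flowbox chart around $x$ together with the identity $Y_t=X_{h\cdot t}$ to produce a contradiction via a metric estimate on pieces of orbit. Your argument bypasses all of this: once $Y(x)=h(x)X(x)$ is established for every regular $x\in\Lambda$, the explicit formula $h=\langle Y,X\rangle/\|X\|^2$ is forced, and continuity is immediate from continuity of $X$, $Y$ and the nonvanishing of $X$. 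This is more direct and avoids the flowbox theorem entirely. The paper's longer route may be motivated by the later $C^0$ setting (Theorem~\ref{thm:B}), where there is no vector field $Y$ to plug into such a formula and one must work at the level of the flows themselves; but for the present $C^1$ statement your approach is both shorter and adequate.
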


\begin{proof}
Fix $Y\in \mathcal Z^1(X)$ arbitrary. We know that $\Lambda$ is preserved by the flow $(Y_t)_t$.
First, suppose by contradiction that exists $x_0 \in \Lambda \setminus Sing(X)$ such that the vectors $X(x_0)$ and $Y(x_0)$ are linearly independent. Consider the continuous map $r: \Lambda  \setminus Sing(X) \rightarrow T_x M$ given by 
$$
r(x) = Y(x)- \frac{\left\langle Y(x), \, X(x) \right\rangle_x}{\left\langle X(x), \, X(x) \right\rangle_x} X(x),
$$ 
where $\left\langle \, \cdot \, , \, \cdot \, \right\rangle_x$ denotes the Riemannian metric of $M$ at $T_x M$. As $r(x_0) \neq 0$, by continuity there exists an open neighborhood $V_{x_0} \subset \Lambda$ of  $x_0$ such that $r(x) \neq 0$ for all $x \in V_{x_0}$. This cannot occur because $r(x) = 0$ for all 
$x \in \Lambda \cap Per((X_t)_t)$ and $Per((X_t)_t)$ is dense in $\Lambda$. Therefore we conclude that $r$ is identically zero on 
$\Lambda\setminus Sing(X)$ or, in other words, the vector fields $X$ and $Y$ are collinear on $\Lambda \setminus Sing(X)$.
In consequence, there exists a map $h: \Lambda \setminus Sing(X) \rightarrow \mathbb{R}$ such that $Y(x) = h(x) \cdot X(x)$ for all $x \in \Lambda \setminus Sing(X)$.

We proceed to prove properties (a), (b) and (c).
The conclusion of (a) is immediate. Indeed, if there are $h_1, \, h_2: \Lambda \setminus Sing(X) \rightarrow \mathbb{R}$ such that $h_1(x) \, X(x) = Y(x) = h_2(x) \, X(x)$, then $(h_1(x) - h_2(x)) \, X(x) = 0$ for all regular points $x\in \Lambda \setminus Sing(X)$. Thus $h_1(x) = h_2(x)$ for all $x \in \Lambda\setminus Sing(X)$ and $h$ is uniquely defined.

Let us prove now that $h$ is constant along regular orbits of $X$. Given a regular point $p$ for $X$, 
we have that 
\begin{equation}\label{eq:comute2Ya}
Y(X_t(p))=h(X_t(p)) X (X_t(p)) 
\qquad \text{for every} \; t\in \mathbb R.
\end{equation}
Then, differentiating the relation $Y_s\circ X_t = X_t \circ Y_s$
with respect to $s$ (at $s=0$) 
we conclude that
\begin{align}
Y ( X_t(p)) & = DX_t(p) Y(p) = DX_t(p) [h(p) X(p)] \nonumber \\
	& = h(p) DX_t(p)  X(p) 
	= h(p) X (X_t(p)) \label{eq:comute2Y}
\end{align}
for every $t\in \mathbb R$. 
Since $X_t(p)$ is a regular point for $X$, equations \eqref{eq:comute2Ya} and \eqref{eq:comute2Y}  show
that the reparametrization $h$ is constant along the orbits of $(X_t)_t$.  In other words, $h(x) = h(X_t(x))$ for all $x \in 
\Lambda \setminus Sing(X)$ and $t \in \mathbb{R}$, which proves (b).

We are left to prove the continuity of the reparametrization $h$. 
First we observe that $Y_t(x)=X_{h(x)t}(x)$ for all $x\in \Lambda \setminus Sing(X)$.
This is a simple consequence of the uniqueness of solution of the ordinary differential equation $u'=Y(u)$
and the fact that $h$ is constant along the orbits of $(X_t)_t$.

Assume by contradiction that $h$ is not continuous at $x\in \Lambda \setminus Sing(X)$. 
Then there exists 
a sequence $(x_n)_{n \in \mathbb{M}}$ convergent to $x$ 
such that 
$\inf_{n\in \mathbb N} |h(x_n) - h(x)|>0$. 
First observe that, as the vector fields $X$ and $Y$ are continuous on the compact $M$, 
$X(x) = \lim\limits_{n \rightarrow \infty} X(x_n) \neq 0$ and $|h(x_n)| = \|Y(x_n)\|/\|X(x_n)\|$, 
the sequence $(h(x_n))_n$ of real numbers is bounded.
Moreover, since $x$ is a regular point, the tubular flowbox theorem assures that  there exists $\vep_x>0$, an open neighborhood $V_x\subset M$ of $x$, and a $C^r$-diffeomorphism 
$g: V_x \rightarrow (-\vep_x,\vep_x) \times B(\vec 0,\vep) \subset \mathbb{R} \times \mathbb{R}^{n-1}$ so that 
$g_*X (y)= (1,0,\dots, 0)$ for every $y\in V_x$.  Choose any sequence $(s_n)_n$ of real numbers 
so that $(s_n, \, x_n) \rightarrow (s, \, x) \in (\mathbb{R}\setminus\{0\}) \times  (\Lambda\setminus Sing(X))$, 
that $|h(x) \cdot s|< \frac{\vep_x}2$ and $|h(x_n) \cdot s_n - h(x) \cdot s| \le \frac{\vep_x}2$ for all 
$n \in \mathbb{N}$. By construction, $\delta_0:= \inf_{n\in \mathbb N} |h(x_n) \cdot s_n - h(x) \cdot s|>0$.
Since the points $X_{h(x_n) \cdot s_n}(x)$ and $X_{h(x) \cdot s}(x)$ lie in the same piece of orbit 
in the tubular flowbox chart then there exists $\delta_1>0$ (depending on $\delta_0$) so that 
 $d(X_{h(x_n) \cdot s_n}(x), \; X_{h(x) \cdot s}(x)) \geq \delta_1>0$ for all $n\in\mathbb N$. 
Therefore,
\begin{align*}
\delta_1 & \leq d(X_{h(x_n) \cdot s_n}(x), \; X_{h(x) \cdot s}(x)) \\
         & \leq d(X_{h(x_n) \cdot s_n}(x), \; X_{h(x_n) \cdot s_n}(x_n)) + d(X_{h(x_n) \cdot s_n}(x_n), \; X_{h(x) \cdot s}(x))\\
         & = d(X_{h(x_n) \cdot s_n}(x), \; X_{h(x_n) \cdot s_n}(x_n)) + d(Y_{s_n}(x_n), \; Y_s(x))
\end{align*}
for all $n\in \mathbb N$. 
The second term in the right hand side above is clearly convergent to zero as $n$ tends to
infinity by the continuous dependence on initial conditions of the flow $(Y_s)_s$. 
Since $(h(x_n))_n$ is bounded, choosing a convergent subsequence $(h(x_{n_k}))_k$ we conclude that
$$
0< \delta_1 \leq d(X_{h(x_{n_k}) \cdot s_{n_k}}(x),\; X_{h(x_{n_k}) \cdot s_{n_k}}(x_{n_k})) + d(Y_{s_{n_k}}(x_{n_k}), \; Y_s(x)) \xrightarrow[]{k \to\infty} 0
$$
leading to a contradiction. This proves item (c) and completes the proof of the proposition.
\end{proof}

We are now in a position to complete the proof of Theorem \ref{thm:criteriumH}.
The previous argument shows that there exists a continuous map $h: \Lambda \setminus Sing(X) \rightarrow \mathbb{R}$ 
that is constant along orbits of $(X_t)_t$ such that $Y(x) = h(x) \cdot X(x)$ for all $x \in \Lambda \setminus Sing(X)$.
Using that the flow $(X_t)_t$ is transitive on $\Lambda$, there exists $x_0 \in \Lambda \setminus Sing(X)$ such that 
$\overline{\{X_t(x_0): \, t \in \mathbb{R}_+\}} = \Lambda$. As $h$ is constant along the orbits of $(X_t)_t$ we 
conclude that  
$h(x)= h(x_0)$ for every $x\in \Lambda \setminus Sing(X)$. Take $c=h(x_0)$. 
The later implies that the vector fields $Y$ and $c X$ coincide in an open and dense subset of $\Lambda$. Thus $Y=cX$,  which proves the triviality of the centralizer on $\Lambda$.
\hfill $\square$

\subsection{Applications} \label{sec:applications}

In this subsection we will consider the applications to volume preserving flows, Lorenz attractors and Hamiltonian flows.

\subsection*{Proof of Corollary~\ref{ks}}

Given $r\ge 1$, let $\mathcal R \subset \mathfrak{X}^r(M)$ denote the $C^r$-residual subset formed by Kupka-Smale
vector fields (see e.g. \cite{palisdemelo}). In particular, all critical elements of vector fields $X\in \mathcal R$ are 
hyperbolic  (hence the ones with the same period are isolated).
Thus, if $\Lambda \subset \overline{Per(X)}$ is a transitive invariant set for the flow generated by $X\in \mathcal R$,
the argument used in the first part of the proof of Theorem~\ref{thm:criteriumH} assures that all the periodic orbits in 
$\Lambda \cap \text{Per}(X)$ are 
preserved by every $Y\in \mathcal Z^1(X)$. In particular, the flow generated by $Y$ preserves the set $\Lambda$,
and the restriction of the vector field to the set $\Lambda$ has trivial centralizer, that is, 
there exists $c\in \mathbb R$ so that $Y=cX$ on $\Lambda$. This proves the first assertion of the corollary.

In the case of volume preserving vector fields one has that $\Omega(X)=M$ for every $X\in \mathfrak{X}^1_\mu(M)$, 
by Poincar\'e recurrence theorem. 
Moreover, there are 
$C^1$-generic sets $\mathcal R_1, \mathcal R_2, \mathcal R_3\subset \mathfrak{X}^1_\mu(M)$ 
so that every $X\in \mathcal R_1$ generates a topologically mixing (hence transitive) flow \cite{bessa-bc},
that periodic orbits of every vector field $X\in \mathcal R_2$ are dense in $M$  \cite[Theorem~11.1]{PughRob},
and 
that hyperbolic periodic orbits for vector fields $X\in \mathcal R_3$ are dense in the non-wandering set (cf. \cite{PughRob}, \cite{R0} and \cite[Proposition~3.1]{N}).
These facts, together with Theorem~\ref{thm:criteriumH} prove the second assertion in the corollary and complete its proof. 
\hfill $\square$

\subsection*{Proof of Corollary~\ref{cor:Lorenz}}

Let $\mathcal U \subset \mathfrak{X}^1(\mathbb R^3)$ be a $C^1$-open set of vector fields and an open elipsoid
$V \subset \mathbb R^3$ containing the origin such that every $X\in \cU$ exhibits  a  geometric Lorenz attractor 
$\Lambda_X = \bigcap_{t\ge 0} \overline{X_t(V)}$. For every $X\in \mathcal U$ there exists a periodic point 
$p_X \in \Lambda_X$ so that the Lorenz attractor $\Lambda_X$ coincides with the homoclinic class
$H(p_X):=\overline{W^{s}(p_X) \pitchfork W^{u}(p_X)}$ (cf. \cite[Proposition~3.17]{vitorzeze}). 
In particular, the restriction of the flow to the attractor is transitive and, by Birkhoff-Smale's theorem, admits a 
dense set of hyperbolic periodic orbits. The corollary is then a direct consequence of Theorem~\ref{thm:criteriumH}.
\hfill $\square$

\subsection*{Proof of Corollary~\ref{hamil1}}

Assume that $n \ge 2$.
First we will verify that vector fields of $C^2$-generic Hamiltonians and connected components of
generic energy levels satisfy the following: the restriction of the Hamiltonian flow to such connected components 
is transitive and admits a dense set of hyperbolic periodic orbits. Our starting point is the following result, which assures
that transitive level sets for $C^2$-generic Hamiltonians are abundant from both topological
and measure theoretical senses.

\begin{theorem}\label{teo25BFRV}\cite[Theorems~2 and 3]{BFRV}
Let $(M^{2n},\omega)$ be a compact symplectic Riemannian manifold. 
There is a residual set $\mathcal{R}_0$ in $C^2(M,\mathbb{R})$ such that for any $H\in\mathcal{R}$ there exists 
a Baire residual, full Lebesgue measure subset $\mathfrak R_H$ of regular energy levels in $H(M) \subset \mathbb R$ such that 
for every $e\in \mathfrak R_H$ the restriction of the flow $(\varphi^H_t)_t$ to every connected component $\cE_{H,e}$ of the
level set $H^{-1}(e)$ is transitive.
\end{theorem}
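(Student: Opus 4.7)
The plan is to establish transitivity on generic energy levels through a two-stage genericity argument, combining Sard's theorem with a Hamiltonian connecting lemma, then intersecting over a countable base.

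First, I would handle the "regular values" part. By Sard's theorem, for every $H\in C^2(M,\mathbb R)$ the set of critical values of $H$ has zero Lebesgue measure. Since $M$ is compact, critical values form a closed subset of $H(M)$, hence regular values form an open and full Lebesgue measure subset $\operatorname{Reg}(H)\subset H(M)$. Because $H$ is $C^2$, the regular level sets $H^{-1}(e)$ are smooth codimension-one submanifolds, so each has at most finitely many connected components which vary continuously with $e$ within $\operatorname{Reg}(H)$. This reduces the problem to proving that, generically in $H$, transitivity on each connected component holds for a Baire residual and full Lebesgue measure subset $\mathfrak R_H \subset \operatorname{Reg}(H)$.

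Next, I would set up the countable-intersection argument. Fix a countable basis $\{U_n\}_{n\in\mathbb N}$ for the topology of $M$. For each pair $(n,m)\in\mathbb N^2$, let $\mathcal{T}_{n,m}$ denote the set of $H\in C^2(M,\mathbb R)$ satisfying the following: for Lebesgue almost every $e\in\operatorname{Reg}(H)$ and every connected component $\cE_{H,e}$ that meets both $U_n$ and $U_m$, there exist $x\in U_n\cap\cE_{H,e}$ and $t>0$ with $\varphi_t^H(x)\in U_m$. A standard diagonal argument shows that if every $\mathcal{T}_{n,m}$ contains a $C^2$-residual set, then $\mathcal R_0=\bigcap_{n,m}\mathcal{T}_{n,m}$ is residual, and for each $H\in\mathcal R_0$ an intersection over the countable pairs $(n,m)$ yields a Baire residual full Lebesgue measure set $\mathfrak R_H$ of regular energies at which transitivity holds on every component; orbit invariance of the components under $(\varphi_t^H)_t$ ensures the transitivity statement is not vacuous.

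The two substantive properties are openness and denseness of $\mathcal{T}_{n,m}$. Openness follows from the continuous dependence of regular energy levels and of the flow $(\varphi_t^H)_t$ on $H$ in the $C^2$ topology: the condition "there exists $t\in(0,T)$ with $\varphi_t^H(x)\in U_m$ for some $x\in U_n\cap\cE_{H,e}$" is stable under small $C^2$-perturbations of $H$ on an open set of energies, and one upgrades to full measure by exhausting $T\to\infty$. Denseness, which is the main obstacle, requires a Hamiltonian perturbation technique: given $H$ and regular $e$, one must produce a $C^2$-small modification $\tilde H$ of $H$ whose flow on the corresponding (continued) energy component $\cE_{\tilde H,e}$ admits an orbit from $U_n$ to $U_m$. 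This is where I would invoke the Hamiltonian connecting lemma (in the spirit of Hayashi, Wen--Xia, and Arnaud), applied fiberwise on the symplectic leaves $H^{-1}(e)$. Concretely, using recurrence of the flow on the compact invariant component $\cE_{H,e}$ (guaranteed by preservation of the Liouville measure on each leaf), one may choose a recurrent orbit passing near both $U_n$ and $U_m$, and then perform a $C^2$-small, compactly supported Hamiltonian perturbation whose Hamiltonian vector field locally twists that orbit into $U_m$. The delicate point, and the true heart of the argument, is to carry out this perturbation while preserving the energy structure and the connected component under consideration; this is precisely the content that in BFRV is distilled from the Hamiltonian connecting lemma, and it is the step I expect to be the main obstacle.
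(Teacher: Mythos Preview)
The paper does not prove this theorem: it is quoted as a black box from \cite{BFRV} and invoked in the proof of Corollary~\ref{hamil1}. There is therefore no proof in the present paper against which to compare your proposal.

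That said, your outline has the right architecture (countable basis, connecting lemma, Baire intersection) and you correctly single out the Hamiltonian connecting lemma as the substantive input. There is, however, a genuine gap in your openness step. You define $\mathcal{T}_{n,m}$ by the condition ``for Lebesgue almost every $e\in\operatorname{Reg}(H)$ and every component $\cE_{H,e}$ meeting $U_n$ and $U_m$, some forward orbit goes from $U_n$ to $U_m$''. A quantifier of the form ``for Lebesgue almost every $e$'' is not $C^2$-open in $H$: a perturbation can shift the set of energies carrying the connecting orbit, and nothing prevents it from losing full measure. The sentence ``one upgrades to full measure by exhausting $T\to\infty$'' does not address this; taking a union over $T$ preserves openness of the condition at a \emph{fixed} energy, but does nothing for the ``almost every $e$'' quantifier. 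In the arguments actually used for results of this type (e.g.\ in \cite{BFRV}, and analogously in \cite{bessa-bc} for divergence-free vector fields), one does not try to make such a set open. Instead one shows that transitivity on a fixed component is a $G_\delta$ property and obtains residuality of the set of \emph{pairs} $(H,e)$ via a Kuratowski--Ulam/Fubini-type slicing argument; the full Lebesgue measure statement for energies is then derived separately, typically from a quantitative or measure-theoretic refinement rather than from openness of $\mathcal{T}_{n,m}$.
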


In order to prove the corollary we are left to show that hyperbolic periodic orbits are dense in typical level sets
of $C^2$-generic Hamiltonians. Although this argument is contained in \cite{BFRV} we include it here for completeness. 
Robinson \cite{R0} proved that there exists a $C^2$ residual subset 
 $\mathcal{R}_1\subset C^2(M,\mathbb{R})$ of 
 Hamiltonians such that for every $H\in \mathcal{R}_1$ all closed orbits are either hyperbolic or elliptic. 
 Note that, by Birkhoff fixed point theorem, the hyperbolic periodic points are dense on $M$ (cf. \cite{N} Proposition 3.1, Corollary 3.2 and \S6). Moreover,  by Theorem~11.5 in  \cite{PughRob}, Newhouse's result can be strengthened in a way that 
 there exists a $C^2$-generic subset $\mathcal R_2 \subset C^2(M, \mathbb R)$ such that the generic connected components of energy levels for $H\in \mathcal R_2$ contains a dense set of hyperbolic periodic orbits. 
The density of hyperbolic periodic orbits and the fact that the ones displaying the same period are isolated 
guarantees, as in Theorem \ref{thm:criteriumH}, that for every Hamiltonian $H\in \mathcal R_0 \cap \mathcal R_1\cap \mathcal R_2$, every $X_K \in \mathcal{Z}^1_{\omega}(X_H)$ and every connected component $\cE_{H,e}\subset H^{-1}(e)$ associated to an energy $e\in \mathfrak R_H$, there exists 
$h: \cE_{H,e} \rightarrow \mathbb{R}$ continuous and invariant along regular orbits of $X_H$,
such that 
$X_K(x) = h(x) \cdot X_H(x)$ for all $x \in \cE_{H,e}$. 
Transitivity at the hypersurface $\cE_{H,e}$ assures that the first integral $h$ is constant on $\cE_{H,e}$,
which completes the proof of  Corollary~\ref{hamil1}. \hfill $\square$

\subsection{Proof of Theorem~\ref{thm:B}} \label{sec:Alinha}

The proof explores arguments used in the proof of Theorem~\ref{thm:criteriumH} and in \cite{BRV}.
Given the continuous flow $(X_t)_t$ denote by $\cE=\cE((X_t)_t)$ and $\cP=\cP((X_t)_t)$ the set of equilibrium and 
period points for $(X_t)_t$, respectively. 
Given $[a,b]\subset \mathbb R$ and $x\in M$ denote $X_{[a,b]}(x)=\{X_t(x) \colon t\in [a,b]\}$.
Our starting point is the following lemma, which plays the role of the tubular flowbox theorem for continuous
flows.

\begin{lemma}\label{le:C0flowbox}
Let $(X_t)_t$ be a continuous flow.
For every $T>0$ and $x\in M$ there exist open neighborhoods $U$ of the compact set
$\{X_t(x) \colon 0\le t\le T\}$ and $V\subset M$ of $x$, so that 
$X_{[0,T]}(y)$ is contained in $U$ for every $y\in V$. 
\end{lemma}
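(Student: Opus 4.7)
\medskip
\noindent\emph{Proof sketch.} The statement is essentially a uniform continuity property for the flow over the compact parameter interval $[0,T]$. The plan is to first choose any open neighborhood $U$ of the compact orbit segment $K = X_{[0,T]}(x)$ (for instance, a tubular $\varepsilon$-neighborhood of $K$ for some $\varepsilon>0$), and then use the joint continuity of the flow map $\Phi \colon \mathbb R \times M \to M$, $(t,y)\mapsto X_t(y)$, to produce the neighborhood $V$ of $x$.

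For each fixed $t\in [0,T]$, joint continuity of $\Phi$ at the point $(t,x)$ yields an open interval $I_t\subset \mathbb R$ containing $t$ and an open neighborhood $V_t\subset M$ of $x$ such that $\Phi(I_t\times V_t)\subset U$. The family $\{I_t\}_{t\in[0,T]}$ is an open cover of the compact interval $[0,T]$, so I would extract a finite subcover $I_{t_1},\dots,I_{t_n}$ and define
\[
V := \bigcap_{i=1}^n V_{t_i},
\]
which is a finite intersection of open sets, each containing $x$. Then for any $y\in V$ and any $t\in[0,T]$, there exists $i$ with $t\in I_{t_i}$, hence $X_t(y)=\Phi(t,y)\in \Phi(I_{t_i}\times V_{t_i})\subset U$. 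This gives $X_{[0,T]}(y)\subset U$ as required.

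There is no serious obstacle: the argument is a standard compactness/continuity reduction and does not use any dynamical information beyond the continuity of $\Phi$, which is built into the definition of a continuous flow. The only mild subtlety is that one must use joint continuity in both variables (rather than separate continuity), but this is automatic for continuous flows on a metric space.
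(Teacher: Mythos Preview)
Your argument is correct and follows essentially the same idea as the paper's proof, which simply records the lemma as an immediate consequence of the uniform continuity of the flow map restricted to the compact set $M\times[0,T]$. You spell out the standard tube-lemma style argument explicitly (using only compactness of $[0,T]$, so in fact not even requiring $M$ compact), whereas the paper invokes uniform continuity in a single line; the content is the same.
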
 

\begin{proof}
This is an immediate consequence of the uniform continuity of the restriction of the continuous flow $X: \mathbb R \times M \to M$ to the compact set $M \times [0,T]$.
\end{proof}

Given a flow $(Y_s)_s \in \mathcal{Z}^0((X_t)_t)$, 
it preserves the set of equilibrium points and periodic orbits of $(X_t)_t$ of every fixed period: if $T\ge 0$ and $X_T(p)=p$ then 
\begin{equation}\label{eq:perC0}
X_{T}(Y_s(p)) = Y_s(X_{T} (p))  = Y_s(p)
\quad\text{for every}\, s\in\mathbb R. 
\end{equation}
Given $T>0$ denote by $M_T$ the set of points in $M$ that are either equilibrium points or periodic points of period smaller or equal to $T$ for $(X_t)_t$. By assumptions (1) and (4), the set $M_T$ consists of a finite number of orbits, hence isolated. 
Moreover, since each map $Y_s$ is a conjugacy of the flow with itself it preserves singularities and periodic orbits
of a fixed period.  Hence,
$Y_s (\gamma_p)=\gamma_p$ for every $p\in Per((X_t)_t) \cup \cE$ and every $s\in \mathbb R$.
In particular $M\setminus M_T$ is a $(X_t)_t$-invariant and dense subset of $M$.
By assumption (1), the set $M_0:=M\setminus (\cE\cup \cP)$ is $(X_t)_t$-invariant and dense.
We now prove that the flow $(Y_s)_s$ preserves all orbits of $(X_t)_t$.

\begin{lemma}\label{le:1}
For every $x\in M$ and every $s\in \mathbb R$ the point $Y_s(x)$ belongs to the orbit of $x$ by $(X_t)_t$.
\end{lemma}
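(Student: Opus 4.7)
I plan to prove the lemma by splitting into cases according to whether $x \in \cE$, $x \in \cP$, or $x \in M_0 := M \setminus (\cE \cup \cP)$. The first two cases are contained in the observations preceding the lemma: for $x \in \cE$, continuity of $s \mapsto Y_s(x)$ taking values in the finite set $\cE$ (by commutativity) forces $Y_s(x) = x \in \gamma_x$; for $x \in \cP$, connectedness of $\mathbb R$ together with the already-established identity $Y_s(\gamma_x) = \gamma_x$ yields $Y_s(x) \in \gamma_x$. So the substantive case is $x \in M_0$.

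In that case, using assumption (2), I would choose periodic points $p_n \to x$. By assumption (4) the set of periodic points of period at most any fixed $T$ is closed, and since $x$ is not periodic one must have $\pi(p_n) \to \infty$ (passing to a subsequence). The already-established relation $Y_\sigma(\gamma_{p_n}) = \gamma_{p_n}$ gives a continuous path $\sigma \mapsto Y_\sigma(p_n)$ in the topological circle $\gamma_{p_n}$; since the flow map $t \mapsto X_t(p_n)$ is a covering $\mathbb R \to \gamma_{p_n}$ (because $\pi_n := \pi(p_n)$ is the minimal period), this path lifts uniquely to a continuous $\tilde r_n : \mathbb R \to \mathbb R$ with $\tilde r_n(0) = 0$ and
\begin{equation*}
Y_\sigma(p_n) = X_{\tilde r_n(\sigma)}(p_n), \qquad \sigma \in \mathbb R.
\end{equation*}
The flow property of $(Y_\sigma)_\sigma$ together with the commutativity relation yields $\tilde r_n(\sigma_1 + \sigma_2) \equiv \tilde r_n(\sigma_1) + \tilde r_n(\sigma_2) \pmod{\pi_n \mathbb Z}$; by continuity and $\tilde r_n(0) = 0$ this is an honest equality, so $\tilde r_n$ is linear: $\tilde r_n(\sigma) = c_n \sigma$ for some $c_n \in \mathbb R$.

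The key step is to bound the sequence $(c_n)$. I would argue by contradiction: assume $|c_n| \to \infty$ along a subsequence and set $\sigma_n := 1/c_n \to 0$. Then
\begin{equation*}
Y_{\sigma_n}(p_n) = X_{c_n \sigma_n}(p_n) = X_1(p_n) \to X_1(x) \quad \text{as } n \to \infty,
\end{equation*}
while joint continuity of $(Y_s)_s$ at $(x, 0)$ gives $Y_{\sigma_n}(p_n) \to Y_0(x) = x$. Hence $X_1(x) = x$, contradicting $x \notin \cP \cup \cE$. Therefore $(c_n)$ is bounded; extracting a convergent subsequence $c_{n_k} \to c$, joint continuity of $(X_t)_t$ yields, for the given $s$,
\begin{equation*}
Y_s(x) = \lim_{k \to \infty} Y_s(p_{n_k}) = \lim_{k \to \infty} X_{c_{n_k} s}(p_{n_k}) = X_{c s}(x) \in \gamma_x,
\end{equation*}
which concludes the proof.

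The main obstacle is precisely the boundedness of $(c_n)$; everything else reduces to compactness arguments and joint continuity of the two flows. This is where the hypothesis that $x$ is neither periodic nor an equilibrium enters in an essential way, and it is worth noting that no differentiable structure of $X$ or $Y$ is used in the argument.
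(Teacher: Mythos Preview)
Your proof is correct and takes a genuinely different route from the paper's. The paper argues by contradiction: assuming $Y_{t_0}(x_0)\notin\gamma_{x_0}$ for some $x_0\in M_0$, it picks periodic points $x_n\to x_0$, writes $Y_{[0,t_0]}(x_n)=X_{[0,t_n]}(x_n)$ for suitable $t_n$, and splits into the cases where $(t_n)$ has a bounded subsequence (yielding $Y_{t_0}(x_0)=X_{t_*}(x_0)$ directly) and where $t_n\to\infty$; the latter case needs a fairly delicate argument with arbitrary compact pieces of orbit $X_{[0,\ell]}(\cdot)$ to force $x_0\in\cE$. Your approach is more structural: you recognise that $t\mapsto X_t(p_n)$ is a covering $\mathbb R\to\gamma_{p_n}$, lift the path $\sigma\mapsto Y_\sigma(p_n)$, and use commutativity plus the flow property to show the lift is additive, hence linear $\tilde r_n(\sigma)=c_n\sigma$. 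Your boundedness argument for $(c_n)$ via the probe $\sigma_n=1/c_n$ is considerably cleaner than the paper's case split. You also obtain more than the lemma asks: $Y_s(x)=X_{cs}(x)$ for \emph{all} $s$ with a single $c$ (two subsequential limits $c,c'$ would give $X_{(c-c')s}(x)=x$ for all $s$, impossible on $M_0$), which anticipates the content of Lemma~\ref{le:existtau} and the lemma following it, where the paper separately establishes existence and linearity of the reparametrisation $\tau(x,\cdot)$. One minor note: you record $\pi(p_n)\to\infty$ but never actually use it; your argument goes through for any sequence of periodic points converging to $x$.
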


\begin{proof}
Arguing by contradiction, assume that there are $x_0 \in M_0$ 
and $t_0>0$ so that the point $y_0:=Y_{t_0}(x_0)$ does not belong to the 
orbit of $x_0$ by $(X_t)_t$. 
By uniform continuity of $Y_{t_0}$, for any $\vep>0$ there exists $\delta>0$ so that 
$Y_{t_0} (\overline{B(x_0,\delta)}) \subset B(y_0,\vep)$.
By assumption (4), equilibrium points for $(X_t)_t$ are preserved by the flow $(Y_s)_s$. Moreover,
the latter condition together with the uniform continuity of the flow at compact pieces of orbits
(cf. Lemma~\ref{le:C0flowbox})
implies that there exists an open neighborhood of the compact set 
$$
\{Y_s(x) \colon (x,s)\in \overline{B(x_0,\delta)} \times  [0,t_0]  \}
$$
formed by regular points for the flow $(X_t)_t$. 

We now use assumptions (2) and (4). 
Since periodic points are dense in $M$ and the set of periodic orbits of a bounded period are isolated, one can
pick a sequence $(x_n)_n$ of periodic points for $(X_t)_t$ so that $\pi(x_n)\gg t_0$ for every $n$ and $d(x_n, x_0) \to 0$. 
Since each $x_n$ is a periodic point, 
and $(Y_s)_s$ preserves all periodic orbits of $(X_t)_t$, there exists
$t_n \in \mathbb R$ 
such that 
\begin{equation}\label{eq2xy01}
X_{[0,t_n]}(x_{n})  =  Y_{[0,t_0]}(x_n)  
\end{equation} 
(see Figure~3 below).
In particular $X_{t_n}(x_n)=Y_{t_0}(x_n) \in B(y_0,\vep)$ for every large $n$ and
\begin{equation}\label{eq2xy}
X_{t_n}(x_n)=Y_{t_0}(x_n)  \to Y_{t_0}(x_0)=y_0 
	\quad\text{as}\; n\to\infty. 
\end{equation} 
\begin{figure}[!h]
\includegraphics[scale=.6]{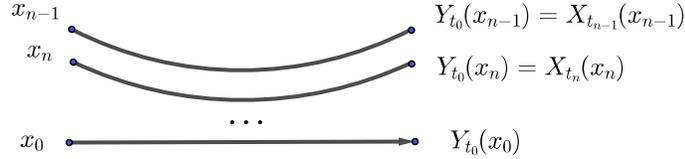}
\caption{Uniform convergence of the compact pieces of orbits $Y_{[0,t_0]}(x_n)$}
\end{figure}

Assume without loss of generality that $t_n\ge 0$ for all $n\ge 1$.
If the sequence $(t_n)_n$ admits a 
subsequence $(t_{n_k})_k$ convergent  to some $t_*\in \mathbb R_+$, the continuity 
of the flows and their time-t maps implies
$$
d(X_{t_{n_k}}(x_{n_k}), Y_{t_0}(x_{0})) = d(Y_{t_0}(x_{n_k}), Y_{t_0}(x_0))  \to 0
\quad \text{as $k\to\infty$}
$$
and, on the other hand,
$$
d(X_{t_{n_k}}(x_{n_k}), Y_{t_0}(x_{n_k}))  \to d(X_{t_*}(x_0), Y_{t_0}(x_0))
\quad \text{as $k\to\infty$.}
$$
This proves that $Y_{t_0}(x_0)=X_{t_*}(x_0)$, which is a contradiction with the choice of $x_0$.

Alternatively, the sequence $(t_n)_n$ tends to infinity as $n\to\infty$.
Fix an arbitrary $\ell>0$ and note that $\ell \ll t_n \le \pi(x_n)$ for every large $n\ge 1$. 
By continuity of the flow on compact trajectories we get
\begin{equation}\label{eq:per1}
X_{[0,\ell]}(x_{n}) \to  X_{[0,\ell]}(x_0) 
	\quad \text{as} \; n\to\infty. 
\end{equation}
Since the flow $(Y_s)_s$ preserves the orbits of the periodic points for $(X_t)_t$,  by \eqref{eq2xy01} 
there exists a unique $s_n\ge 0$ satisfying
\begin{equation}\label{eq:per2}
X_{[0,\ell]}(x_{n})  =  Y_{[0,s_n]}(x_n).  
\end{equation}
Note that $\ell \ll t_n$ implies $s_n \le t_0$.
We may assume that there exists a subsequence $(x_{n_k})_k$ of periodic points so that 
$s_{n_k} \to s_*:=\inf_{k\ge 1} s_{n_k}$ as $k\to\infty$. 
There are still two cases to consider.
If $s_*>0$ then it follows from equalities \eqref{eq:per1} and ~\eqref{eq:per2} and the convergence
$
Y_{[0,s_{n_k}]}(x_{n_k}) \to Y_{[0,s_*]}(x_0)
	\; \text{as} \; k\to\infty
$ 
that $X_{[0,\ell]}(x_0) =Y_{[0,s_*]}(x_0)$. In other words, the orbit of $x_0$ by the flow $(Y_s)_s$ preserves the 
piece of orbit $X_{[0,\ell]}(x_0)$. If $s_*=0$ then equality~\eqref{eq:per2} and uniform continuity of the flow ensures that 
$$
X_{[0,\ell]}(x_{n_k}) = Y_{[0,s_{n_k}]}(x_{n_k})=Y([0,s_{n_k}] \times \{x_{n_k}\}) \to Y_0(x_0)=x_0
$$ 
as $k\to\infty$.
Together with ~\eqref{eq:per1}, this implies that $x_0 \in \cE$, which contradicts the choice $x_0\in M_0$.
This proves that the flow $(Y_s)_s$ preserves all orbits of $(X_t)_t$ and
completes the proof of the lemma.
\end{proof}

We need the following lemma, inspired by \cite{BRV}, which will play a role similar to Proposition~\ref{prophconservativo}.

\begin{lemma}\label{le:existtau}
Given $x\in M_0$ there exists a unique function $\tau(x,\cdot): \mathbb R \to \mathbb R$ so that:
\begin{enumerate}
\item $Y_t(x) = X_{\tau(x, t)}(x)$ for every $x \in M$ and every $t\in \mathbb R$, and
\item $\tau(x,\cdot) : \mathbb R \to \mathbb R$ is continuous for every $x\in M_0$.
\end{enumerate}

\end{lemma}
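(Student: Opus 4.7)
My approach is in three parts: produce the function $\tau$ from Lemma~\ref{le:1}, obtain uniqueness on $M_0$ from orbit-injectivity, and establish continuity by a reduction to $t=0$ via a cocycle identity.

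Existence of $\tau(x,t)$ satisfying~(1) is immediate from Lemma~\ref{le:1}: for every pair $(x,t)$ the point $Y_t(x)$ lies on the $(X_t)_t$-orbit of $x$, so some $\tau(x,t)\in\mathbb{R}$ does the job. For $x\in M_0$, the map $s\mapsto X_s(x)$ is injective (the orbit is neither a point nor a circle), so this $\tau(x,t)$ is uniquely determined, which is the uniqueness asserted in the statement (on $\cE\cup\cP$ one chooses any representative).

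For continuity on $M_0$, I would first derive the cocycle relation
\[
\tau(x,t+s)\;=\;\tau(x,t)\,+\,\tau\bigl(X_{\tau(x,t)}(x),\,s\bigr),
\]
from the flow property $Y_{t+s}=Y_s\circ Y_t$ applied at $x$ together with uniqueness at the regular point $X_{\tau(x,t)}(x)\in M_0$. Since the base point also belongs to $M_0$, this reduces continuity of $\tau(x,\cdot)$ at an arbitrary $t_0$ to continuity of $\tau(z,\cdot)$ at $0$ for every $z\in M_0$. To prove the latter, fix $z\in M_0$ and a sequence $s_n\to 0$ and argue by contradiction that $\tau(z,s_n)\to 0$. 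If some subsequence $\tau(z,s_{n_k})$ stays bounded and converges to $s_*$ with $|s_*|\ge\epsilon$, then $X_{s_*}(z)=\lim_k X_{\tau(z,s_{n_k})}(z)=\lim_k Y_{s_{n_k}}(z)=z=X_0(z)$ contradicts injectivity at $z$. The delicate subcase is $|\tau(z,s_n)|\to\infty$, which forces $z$ to be recurrent for $(X_t)_t$, so that $s\mapsto X_s(z)$ is injective but not a topological embedding and continuity of $s\mapsto Y_s(z)$ in $M$ does not automatically lift to continuity of $\tau(z,\cdot)$ in $\mathbb{R}$.

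To overcome this main obstacle I would exploit hypotheses (2) and (4). Since $z\in M_0$ is non-periodic and, by (4), only finitely many periodic orbits of any bounded period exist, the denseness of $\text{Per}((X_t)_t)$ at $z$ supplies periodic points $p_k\to z$ whose periods $\pi(p_k)\to\infty$. For each $p_k$ the flow $(Y_s)_s$ preserves the periodic orbit $\gamma_{p_k}$ by~\eqref{eq:perC0}, so since $\gamma_{p_k}$ is topologically a circle there is a unique continuous lift $\tilde\psi_k:\mathbb{R}\to\mathbb{R}$ with $\tilde\psi_k(0)=0$ and $Y_s(p_k)=X_{\tilde\psi_k(s)}(p_k)$. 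The key technical point is that uniform continuity of the continuous flow $(Y_s)_s$ on the compact manifold $M$, combined with the tube property of Lemma~\ref{le:C0flowbox} applied along long pieces of the $X$-orbit of $z$, yields a uniform modulus of continuity for the family $\{\tilde\psi_k\}_k$ near $s=0$. Sending $k\to\infty$ at fixed small $s_n$, continuity of $X$ and $Y$ together with injectivity at $z$ give $\tilde\psi_k(s_n)\to\tau(z,s_n)$ along a subsequence, and equicontinuity at $0$ then forces $|\tau(z,s_n)|$ to stay bounded, contradicting $|\tau(z,s_n)|\to\infty$ and completing the proof. The hardest step is precisely this equicontinuity of the lifts, where assumptions (2) and (4) — unused in the bounded subcase — are essential for ruling out the recurrent pathology.
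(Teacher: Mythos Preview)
Your existence and uniqueness arguments match the paper's. The divergence is in continuity. The paper's argument there is extremely short: it observes $X_{\tau(x,t_n)}(x)=Y_{t_n}(x)\to Y_t(x)=X_{\tau(x,t)}(x)$ and immediately concludes $\tau(x,t_n)\to\tau(x,t)$, with no cocycle reduction, no treatment of an unbounded subsequence, and no appeal to hypotheses~(2) or~(4). In effect the paper treats the inverse of $s\mapsto X_s(x)$ as continuous on the orbit, which is exactly the step you flag as delicate for recurrent $x\in M_0$.

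Your route---cocycle reduction to $t=0$, then approximation by periodic orbits with continuous circle-lifts $\tilde\psi_k$ and an equicontinuity/IVT argument---is sound and does close that gap. It is heavier than strictly necessary, though: a hypothesis-free patch is available, since the commutation relation plus orbit-invariance already yield additivity $\tau(x,s+s')=\tau(x,s)+\tau(x,s')$ (the paper derives this only in the \emph{next} lemma, after assuming continuity), and the graph $\{(s,r):Y_s(x)=X_r(x)\}$ is closed in $\mathbb R^2$; a non-linear additive map $\mathbb R\to\mathbb R$ has graph dense in $\mathbb R^2$, so $\tau(x,\cdot)$ must be linear, hence continuous. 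Thus your approach buys a fully justified proof at the cost of invoking (2) and (4), while the paper's version is terse and, as written, leaves unaddressed precisely the recurrent subcase you identify.
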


\begin{proof}
First, we prove that a reparametrization $\tau(x,\cdot)$ as in the statement of the lemma does exists.
As $x\in M_0=M\setminus (\cP \cup \cE)$ and $t\in \mathbb R$ the point $Y_t(x)$ belongs to the orbit of $x$ by $(X_t)_t$, hence there exists $\tau(x,t)\in \mathbb R$ such that $Y_t(x) = X_{\tau(x, t)}(x)$.
It remains to prove the continuity of  $\tau(x,\cdot)$.
Assume that $t_n \to t \in \mathbb R$.
As $x\notin \cE \cup \cP$ then for every $\zeta>0$ and every large $n\ge 1$ we
have that $Y_{t_n}(x)$ belongs to the compact piece of orbit $\{  Y_s(x)\colon s\in [t-\zeta,t+\zeta]\}$
containing $x$ and 
\begin{align*}
X_{\tau(x, \, t_n)}(x) = Y_{t_n}(x)  \to Y_{t}(x) = X_{\tau(x, \, t)}(x), 
\end{align*}
from which we conclude that $\tau(x, \, t_n) \to \tau(x, \, t)$. This proves the continuity of $\tau(x,\cdot)$ 
as claimed.
It remains to prove the uniqueness of the reparameterization.
Fix $x\in M_0$ and assume there are $\tau_i(x,\cdot): \mathbb R \to \mathbb R$ ($i=1,2$) satisfying  (1)-(2) above. 
The equality $X_{\tau_1(x,t)}(x)=X_{\tau_2(x,t)}(x)$ for every $t\in \mathbb R$ ensures that 
\begin{equation}\label{eq:circular}
X_{\tau_1(x,t)-\tau_2(x,t)}(x)=x
\quad\text{for every }\; x\in \mathbb R.
\end{equation}
Since $x\in M_0$ then $\tau_1(x,t)-\tau_2(x,t)=0$ for all $t\in \mathbb R$. 
This proves the uniqueness of the reparameterization and concludes the proof of the lemma.

\end{proof}

\begin{lemma}
There exists a continuous map $h: M \setminus (\cE\cup\cP) \rightarrow \mathbb{R}$, constant along regular orbits of 
$(X_t)_t$, such that $Y_t(x) = X_{h(x) t}(x)$ for every $x \in M \setminus (\cE\cup\cP)$ and every $t\in \mathbb R$. \end{lemma}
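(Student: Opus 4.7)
The plan is to upgrade the reparametrization $\tau(x,\cdot)$ supplied by Lemma~\ref{le:existtau} into a linear one and then read off $h(x):=\tau(x,1)$. The first step is to exploit the group law for $(Y_t)_t$ together with the commutativity of the two flows. Fixing $x\in M_0$ and arbitrary $t,s\in\mathbb R$, the identity $X_r\circ Y_t=Y_t\circ X_r$ gives
\begin{equation*}
X_{\tau(x,t+s)}(x)=Y_{t+s}(x)=Y_t\bigl(X_{\tau(x,s)}(x)\bigr)=X_{\tau(x,s)}\bigl(Y_t(x)\bigr)=X_{\tau(x,t)+\tau(x,s)}(x),
\end{equation*}
so the uniqueness clause of Lemma~\ref{le:existtau} (available because $x\notin\cE\cup\cP$, compare with \eqref{eq:circular}) will force the Cauchy equation $\tau(x,t+s)=\tau(x,t)+\tau(x,s)$. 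Combined with the continuity of $\tau(x,\cdot)$, this yields the linear dependence $\tau(x,t)=h(x)\,t$ with $h(x):=\tau(x,1)$, and in particular $Y_t(x)=X_{h(x)t}(x)$ on $M_0$.

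Next I would check that $h$ is constant along orbits of $(X_t)_t$ inside $M_0$. Given $x\in M_0$ and $s\in\mathbb R$ the point $X_s(x)$ also lies in $M_0$, and using $Y_t\circ X_s=X_s\circ Y_t$ one obtains
\begin{equation*}
X_{h(X_s(x))\,t}\bigl(X_s(x)\bigr)=Y_t\bigl(X_s(x)\bigr)=X_s\bigl(X_{h(x)t}(x)\bigr)=X_{h(x)t}\bigl(X_s(x)\bigr),
\end{equation*}
and the same uniqueness argument applied at $X_s(x)$ then gives $h(X_s(x))=h(x)$.

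The hard part will be the continuity of $h$ on $M_0$. Given $x_0\in M_0$ and a sequence $x_n\to x_0$, I first claim that $(h(x_n))_n$ is bounded. If not, along some subsequence $|h(x_n)|\to\infty$; setting $s_n:=1/h(x_n)\to 0$, the identity $Y_{s_n}(x_n)=X_{h(x_n)s_n}(x_n)=X_1(x_n)$ together with the joint continuity of the flows $X$ and $Y$ would yield $x_0=Y_0(x_0)=\lim_n Y_{s_n}(x_n)=\lim_n X_1(x_n)=X_1(x_0)$, contradicting $x_0\notin\cE\cup\cP$. Boundedness in hand, any convergent subsequence $h(x_{n_k})\to h_\ast$ satisfies
\begin{equation*}
X_{h_\ast}(x_0)=\lim_{k\to\infty}X_{h(x_{n_k})}(x_{n_k})=\lim_{k\to\infty}Y_1(x_{n_k})=Y_1(x_0)=X_{h(x_0)}(x_0),
\end{equation*}
and the injectivity of the orbit map $t\mapsto X_t(x_0)$, which holds precisely because $x_0$ is neither equilibrium nor periodic, forces $h_\ast=h(x_0)$. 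Since every convergent subsequence of $(h(x_n))_n$ shares the same limit, $h(x_n)\to h(x_0)$, completing the proof.
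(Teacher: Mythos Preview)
Your proof is correct and follows the paper's overall architecture: derive additivity of $\tau(x,\cdot)$ from the commutation relation, invoke continuity to get linearity, set $h(x)=\tau(x,1)$, and check orbit-invariance. The genuine difference lies in the continuity argument. The paper argues by contradiction, splitting into two cases (a subsequence $h(x_{n_k})\to h_*\neq h(x)$, and $|h(x_n)|\to\infty$) and in each case tracks \emph{compact pieces of orbit} $X_{[0,\ell]}(\cdot)$ via Lemma~\ref{le:C0flowbox} to manufacture a periodic or equilibrium point. Your argument is more economical: the rescaling trick $s_n=1/h(x_n)$ turns the unbounded case into the single identity $Y_{s_n}(x_n)=X_1(x_n)$, whose limit immediately forces $x_0=X_1(x_0)$; and for the bounded case you use only the pointwise identity $Y_1(x_{n_k})=X_{h(x_{n_k})}(x_{n_k})$ together with injectivity of $t\mapsto X_t(x_0)$ on $M_0$. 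This avoids any appeal to convergence of arcs and makes the proof shorter; the paper's version, on the other hand, keeps the argument closer in spirit to the flowbox reasoning used in Proposition~\ref{prophconservativo}. One minor point worth making explicit in your write-up: when you pass to $s_n=1/h(x_n)$ you are implicitly assuming $h(x_n)\neq 0$ along the subsequence, which is automatic once $|h(x_n)|\to\infty$.
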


\begin{proof}
Let $\tau$ be given by Lemma~\ref{le:existtau}.
We claim that there exists a function $h : M_0 \to \mathbb R$ so that 
$h(X_t(x))=h(x)$ and $\tau(x,t)=h(x) t$  
for every $x\in M_0$ and $t\in \mathbb R$.
Indeed, by Lemma~\ref{le:existtau}, given $x\in M_0$
and $s,t\in \mathbb R$,
\begin{align}
X_{t + \tau(x,s)} (x) & = X_{t} (X_{\tau(x,s)} (x)) = X_{t} (Y_s(x)) \nonumber \\
                              & = Y_{s} (X_t(x)) = X_{\tau(X_t(x),s)} (X_t(x)) = X_{\tau(X_t(x),s)+t} (x). \label{eq:comm}
\end{align}
By uniqueness of the reparameterization, the latter proves that $x\mapsto \tau(x,s)$ is constant along 
the orbits of $(X_t)_t$ for every $s\in \mathbb R$. Since $Y_s(x)$ belongs to the orbit of $x$ by $(X_t)_t$, we conclude that 
\begin{align*}
X_{\tau(x,t+s)} (x)  = Y_{t+s}(x) 
                                = Y_t ( Y_s(x)) 
                                = X_{\tau(Y_s(x),t)} (X_{\tau(x,s)}(x))   
                                = X_{\tau(x,t)+\tau(x,s)}(x)
\end{align*}
and so $\tau(x,t+s)=\tau(x,t)+\tau(x,s)$ for all $x\in M_0$ and $s,t \in \mathbb{R}$. {Since $\tau(x,\cdot)$ is continuous} (recall item (2) at Lemma~\ref{le:existtau}) then it is linear, and there exists $h(x)\in \mathbb R$ so that $\tau(x,t) = h(x) t$ for all $x \in M_0$. 

We are left to prove the continuity of $h$.
Assume by contradiction that $h: M_0 \rightarrow \mathbb{R}$  is not continuous.
Take $(x_n)_n \in M_0$ so that $(x_n)_n$ is convergent to 
$x\in M_0$ but there exists $\delta>0$ and a subsequence $(x_{n_k})_k$ such that $|h(x_{n_k})-h(x)|\ge \delta>0$ for all $k$.
There are two cases to consider:

\medskip
\noindent (i) \emph{there exists a subsequence, which we denote by  $(x_{n_k})_k$
for simplicity, so that $(h(x_{n_k}))_k$ converges to some $h\neq h(x)$;}
\medskip

\noindent Note that
 $X_{h(x)t}(x_{n_k})\to  X_{h(x)t}(x) = Y_t(x)$ and  
\begin{equation}\label{XYh}
Y_t( x_{n_k}) = X_{h(x_{n_k}) t}(x_{n_k}) 
= X_{[h(x_{n_k})-h(x)] t}(X_{h(x)t}(x_{n_k})). 
\end{equation}
Taking the limit as $k\to\infty$ in ~\eqref{XYh}
we conclude that
$$
Y_t( x) = X_{[h-h(x)] t}(Y_t( x)) 
	\quad\text{for every $t\in \mathbb R$}.
$$
As $h(x)\neq h$ the later implies that $Y_t( x) \in \cP(x)$ which contradicts the fact that $(Y_s)_s$ preserves the 
orbit of $x$ by $(X_t)_t$.

\medskip
\noindent (ii) \emph{$|h(x_n)|$ tends to infinity as $n\to\infty$;}
\medskip

\noindent Assume that there exists a subsequence $(x_{n_k})_k$ so that $\lim_{k\to\infty} h(x_{n_k})=+\infty$ (the other case is analogous) and fix $t>0$.
Given $\ell>0$ arbitrary we have $h(x_{n_k})t \ge h(x) t+\ell$ provided that $k\ge 1$ large.
By continuity of the flows $(X_t)_t$ and $(Y_s)_s$ on compact pieces of orbits (Lemma~\ref{le:C0flowbox}) 
we have that 
$$
X_{[0,\ell]}(X_{h(x_{n_k})t}(x_{n_k})) = X_{[0,\ell]}(Y_{t}(x_{n_k})) \to X_{[0,\ell]}(Y_t(x))
$$
and also
$
Y_{[0,t]}(x_{n_k}) \to Y_{[0,t]}(x).
$
Taking the limit as $k\to\infty$ in  ~\eqref{XYh} we conclude that the 
compact piece of orbit $X_{[0,(h(x_{n_k})-h(x))t]}(X_{h(x)t}(x_{n_k}))$ both converge to $Y_t(x)$, hence
obtaining that, on the one hand,
$$
X_{[0,\ell]}(X_{h(x)t}(x_{n_k})) \to X_{[0,\ell]}(X_{h(x)t}(x)) =Y_t(x) 
	\quad\text{as $k\to\infty$,}
$$
while
$$
X_{[0,\ell]}(X_{h(x_{n_k})t}(x_{n_k})) = X_{[0,\ell]}(Y_{t}(x_{n_k})) \to X_{[0,\ell]}(Y_t(x))
	\quad\text{as $k\to\infty$}.
$$
Altogether we conclude that $X_\ell(Y_t(x))=Y_t(x)$ for every $\ell>0$, which implies that
$Y_t(x)$ is a equilibrium point for $(X_t)_t$. Since the latter contradicts the fact that $(Y_s)_s$ preserves $M_0$ we conclude that $h$ is continuous, completing the proof of the lemma.
\end{proof}

We are in a position to complete the proof of Theorem~\ref{thm:B}. 
Indeed, since $(X_t)_t$ is transitive and
 $h: M_0 \rightarrow \mathbb{R}$ is constant along orbits of $(X_t)_t$ we conclude that
$h(x)=c$ for every $x\in M_0$. Therefore $Y_t(x) = X_{ c t}(x)$ for every $t\in \mathbb R$ and every $x$ on a dense subset of $M$. 
Since the flows $(X_t)_t$ and $(Y_s)_s$ are continuous then we conclude that $Y_t=X_{ct}$ for every $t\in \mathbb R$. This proves the triviality of the $C^0$-centralizer 
of $(X_t)_t$, as desired.

\vspace{.4cm}
\subsubsection*{Acknowledgements:} 
This work is part of the first author's PhD thesis at UFBA. The second author was partially supported by 
CNPq-Brazil. The authors are grateful to A. Arbieto and C. Maquera for valuable comments, and
indebted to the anonymous referee for the careful reading and a number of suggestions that helped to 
improved the manuscript.


\end{document}